\newlength{\defbaselineskip} \setlength{\defbaselineskip}{\baselineskip}
\newcommand{\xleftrightarrow}[2][]{\ext@arrow 3359\leftrightarrowfill@{#1}{#2}}
\newcommand{\xdashrightarrow}[2][]{\ext@arrow 0359\rightarrowfill@@{#1}{#2}}
\newcommand{\xdashleftarrow}[2][]{\ext@arrow 3095\leftarrowfill@@{#1}{#2}}
\newcommand{\xdashleftrightarrow}[2][]{\ext@arrow 3359\leftrightarrowfill@@{#1}{#2}}
\def\rightarrowfill@@{\arrowfill@@\relax\relbar\rightarrow}
\def\leftarrowfill@@{\arrowfill@@\leftarrow\relbar\relax}
\def\leftrightarrowfill@@{\arrowfill@@\leftarrow\relbar\rightarrow}
\def\arrowfill@@#1#2#3#4{%
  $\m@th\thickmuskip0mu\medmuskip\thickmuskip\thinmuskip\thickmuskip
   \relax#4#1
   \xleaders\hbox{$#4#2$}\hfill
   #3$%
}
\newtheorem{thm}{Theorem}[section]
\newtheorem{cor}[thm]{Corollary}
\newtheorem{lemma}[thm]{Lemma}
\newtheorem{lem}[thm]{Lemma}
\newtheorem{prop}[thm]{Proposition}
\theoremstyle{definition}
\newtheorem{rem}[thm]{Remark}
\theoremstyle{remark}
\newtheorem*{proof1}{Proof of Theorem \ref{thm:main}}
\newtheorem*{proof2}{Proof of Corollary \ref{cor:stability}}
\tikzset{
  edge node/.code={%
      \expandafter\def\expandafter\tikz@tonodes\expandafter{\tikz@tonodes #1}}}
\tikzset{
  subseteq/.style={
    draw=none,
    edge node={node [sloped, allow upside down, auto=false]{$\subseteq$}}},
  Subseteq/.style={
    draw=none,
    every to/.append style={
      edge node={node [sloped, allow upside down, auto=false]{$\subseteq$}}}
  }
}
 \numberwithin{equation}{section}
\numberwithin{equation}{section} \theoremstyle{definition}
\DeclareMathOperator{\Hom}{Hom}
\DeclareMathOperator{\Sym}{Sym}
\DeclareMathOperator{\sym}{Sym}
          \newcommand\PP{{\mathbb{P}}}
          \newcommand\ZZ{{\mathbb{Z}}}
          \newcommand\oo{\mathcal O}
          \newcommand\Z{\mathbb{Z}}
\definecolor{zielony}{rgb}{0.5, 0.9, 0.1}
\definecolor{czerwony}{rgb}{0.8, 0.2, 0.1}
\definecolor{niebieski}{rgb}{0.3, 0.1, 0.9}
\newcounter{appendice}
\author[Grzegorz Kapustka]{Grzegorz Kapustka}
\address{G. Kapustka: Department of Mathematics and Informatics, Jagiellonian University, \L ojasiewicza 6, 30-348, Krak\'ow, Poland}
\email{grzegorz.kapustka@uj.edu.pl}
\author[Micha\l \ Kapustka]{Michal Kapustka}
\address{M. Kapustka: Institute of Mathematics of the Polish Academy of Sciences, ul. Śniadeckich 8, 00-656 Warszawa, Poland}
\email{michal.kapustka@impan.pl}
\author[Giovanni Mongardi]{Giovanni Mongardi}
\address{G. Mongardi: Department of Mathematics, Alma mater studiorum Università di Bologna, P.zza di porta san Donato 5, 40126 Bologna, Italy}
\email{giovanni.mongardi2@unibo.it}
\begin{document}
\title{EPW sextics vs EPW cubes}
\maketitle
\begin{abstract}
      
      
      We study a correspondence between double EPW cubes and double EPW sextics, two families of polarized hyper-K\"ahler manifolds related to Gushel--Mukai fourfolds. We infer relations between these families in terms of Hodge structures and moduli spaces of elliptic curves.
      As an application, we prove that a very general double EPW cube is the moduli space of stable objects with respect to a suitable stability condition on the Kuznetsov component of its corresponding Gushel--Mukai fourfolds; this answers a problem posed by Perry, Pertusi and Zhao.

\end{abstract}
\section{Introduction}

Let $W$ be a $6$-dimensional complex vector space. Let $$\textstyle LG(10,\bigwedge^3 W ) \subset G(10, \bigwedge^3 W )$$ be the Lagrangian Grassmannian corresponding to the conformal symplectic structure on $\bigwedge^3 W$ given by wedge product. 
For a fixed $A\in LG(10,\bigwedge^3 W)$,
 there are two naturally associated degeneracy loci: 
\begin{equation}\textstyle
	X_A:=\{[v]\in \mathbb P(W)\mid\ \dim (v\wedge \bigwedge^2 W)\cap A\geq 1 \}
 \end{equation}
 is the EPW sextic associated to $A$,
	
	\begin{equation} \textstyle Y_A:=\{[U]\in G(3,W)\mid\ \dim (\bigwedge^2 U\wedge W)\cap A\geq 2 \}
 \end{equation}
 is the EPW cube associated to $A$.
\bigskip \\
When $A$ is general, canonical double covers of $X_A$ and $Y_A$ were constructed; they are polarized hyper-K\"{a}hler manifolds of $K3^{[n]}$-type: the double EPW sextic $\widetilde{X}_A$ of dimension 4, and the double EPW cube $\widetilde{Y}_A$ of dimension 6 \cite{O2,IKKR1}. 

Recall that for any hyper-K\"{a}hler manifold $X$ there is a natural integral nondegenerate quadratic form on $H^2(X,\mathbb Z)$ called the Beauville-Bogomolov-Fujiki (BBF) form. The BBF is used to classify families of polarised hyper-K\"{a}hler manifolds. In this case, double EPW sextics have polarisation of BBF degree 2, whereas double EPW cubes have polarisation of degree 4 and divisibiility 2 with respect to the BBF.

In this work we study the relations between the manifolds $\widetilde{X}_A$, $\widetilde{Y}_A$, associated to the same Lagrangian $A$. In particular, it appears that although $\widetilde{X}_A$ and $\widetilde{Y}_A$ have different dimension they are deeply connected.

 The main object relating the two families is obtained as a correspondence. More precisely, 
 to
 a Lagrangian $A\subset \bigwedge^3 W$ we associate  a further variety, the incidence correspondence $$K_A=\{([U],[\alpha])\in Y_A\times \mathbb{P}(\textstyle \bigwedge^3 W)\mid\ [\alpha] \in \PP(T_U)\cap \PP(A)\cap \Omega\},$$
 where $\Omega\subset \PP( \bigwedge^3 W)$ is the subset of  $3$-vectors decomposing into a product of a $1$-vector and a $2$-vector, and $T_U:=\bigwedge^2 U\wedge W$.
 The sixfold $K_A$ admits a natural generically $3:1$ map to $Y_A$
and a map to $X_A$ which is generically a fibration by surfaces of general type with even sets of nodes. 

 Recall that the primitive part $H^2_{{\operatorname{prim}}}(X,\mathbb{Z})$ of the Hodge structure of a polarized hyper-K\"ahler manifold $(X, L)$ is the induced Hodge structure on the  orthogonal complement in $H^2(X,\mathbb{Z})$ with respect to the BBF form, of the class in $H^2(X,\mathbb{Z})$ of the polarization $L$. The main result of the paper is that the primitive parts of the Hodge structures of double EPW sextics $\widetilde{X}_A$ and cubes $\widetilde{Y}_A$ only depend on the Lagrangian $A$.

\begin{thm}\label{thm:main}
Let $A$ be a general Lagrangian subspace of $\bigwedge^3 W$ and let $\widetilde{X}_A$ and $\widetilde{Y}_A$ be the double EPW sextic and cube associated to $A$. Then the polarized integral Hodge structures $H^2_{{\operatorname{prim}}}(\widetilde{X}_A,\mathbb{Z})$ and $H^2_{{\operatorname{prim}}}(\widetilde{Y}_A,\mathbb{Z})$ are isometric.
\end{thm}
\noindent Here, general means a Lagrangian as in \eqref{eq:Lagrangian_condition}, see Section \ref{sec:prel} for a precise explanation of the conditions.

Another intriguing consequence of the investigation of the correspondence $K_A$ is a relation between  moduli spaces of minimal degree elliptic curves on a double EPW sextic $\widetilde{X}_A$ and a double EPW cube $\widetilde{Y}_A$ associated to the same Lagrangian $A$. More precisely, for a hyper-K\"ahler manifold $\widetilde{X}$ and chosen curve cohomology class $\beta$,
 let $M_{1,0}(\widetilde{X},\beta)$ denote the moduli space (not necessarily compact) of maps $f\,\colon\,  C \to \widetilde{X}$ from smooth curves $C$ of genus $1$ such that $f_*[C]=\beta$.
Note that if the hyper-K\"ahler manifold $\widetilde X$ has Picard number $1$, there exists a unique curve class of minimal degree. If $\beta$ denotes this class then $M_{1,0}(\widetilde{X},\beta)$ is called the moduli space of minimal degree elliptic curves. 

 It can be shown that the expected dimension of this moduli space is always $1$, independently of the dimension of $\widetilde{X}$. It was conjectured furthermore in \cite[Conjecture 1.1]{NOb} that this moduli space is indeed always of dimension $1$. As evidence, it was proven in \cite[Theorem 1.3]{NOb} that the moduli space of elliptic curves of minimal degree in the Fano variety of  a general cubic fourfold is a curve. We extend this result proving  \cite[Conjecture 1.1]{NOb} for very general double EPW sextics and double EPW cubes and compare the two obtained curves.
  
\begin{thm}\label{moduli elliptic} Let $A$ be a very general Lagrangian subspace of $\bigwedge^3 W$ and let $\widetilde{X}_A$ and $\widetilde{Y}_A$ be the double EPW sextic and cube associated to $A$. Let $\beta_X$ and $\beta_Y$ be the cohomology classes of curves of minimal degree on $\widetilde{X}_A$ and $\widetilde{Y}_A$ respectively. 
 Then the moduli spaces $M_{1,0}(\widetilde{X}_A,\beta_X)$ and  $M_{1,0}(\widetilde{Y}_A,\beta_Y)$ are $1$-dimensional and their reduced structures are isomorphic to each other.
\end{thm}
 Theorem \ref{moduli elliptic} provides another interesting relation between double EPW sextics and double EPW cubes associated to the same Lagrangian that seems not to be directly related with the fact that the EPW sextic and double EPW cube are moduli spaces of stable objects on the same K3 category.

 The isometry of Theorem \ref{thm:main} is finally used to describe very general EPW cubes as moduli spaces of stable objects on some K3 category.
 For that, recall first that Gushel--Mukai (GM) fourfolds are smooth transversal intersections $$Z =CG(2,5)\cap H_1\cap H_2\cap Q\subset \PP^{10},$$ where $H_1,H_2$ are hyperplanes, $Q$ is a quadric and $CG(2,5)$ is the cone over the Pl\"ucker embedding of the Grassmanian of planes in a $5$-dimensional complex vector space. They are expected to behave in a similar way to cubic fourfolds \cite{DK,DK moduli,KP1}. They also have associated hyper-K\"ahler manifolds and K3 categories \cite{BLMNPS,D}, and their rationality is conjectured in terms of these invariants, see \cite{D2} for an excellent survey on these manifolds. 

 We prove that double EPW cubes are moduli spaces of stable objects on the Kuznetsov components of some GM fourfolds, with respect to some stability conditions and Mukai vectors (see \cite[Part VI]{BLMNPS} and also \cite{PPZ}). To be more precise, recall from \cite{DK}, that to a Lagrangian subspace $A\subset \bigwedge^3 W$ one can associate a $4$-dimensional family of birational GM fourfolds, which are called period partners. Furthermore, by \cite[Theorem 4.5]{D2}, it is known that  $$H^2_{{\operatorname{prim}}}(\widetilde{X}_A,\mathbb{Z})\cong H^4_{{\operatorname{van}}}(Z_A,\mathbb{Z})(-1),$$ for any GM fourfold $Z_A$ associated to $A$. Here $H^4_{{\operatorname{van}}}(Z_A,\mathbb{Z})(-1)$ stands for the Hodge structure on the vanishing cohomology of $Z_A$ i.e. on the orthogonal complement with respect to cup product of the restriction of $H^4(G(2,5), \mathbb Z))$ to $H^4(Z_A, \mathbb Z)$. It follows that also $$H^2_{{\operatorname{prim}}}(\widetilde{Y}_A,\mathbb{Z})\cong H^4_{{\operatorname{van}}}(Z_A,\mathbb{Z})(-1).$$ Recall also that for any GM fourfold $Z_A$ the Kuznetsov component $\operatorname{Ku}(Z_A)$ of the bounded derived category of coherent sheaves $ D^b(Z_A)$ on $Z_A$ is defined by the semiorthogonal decomposition
$$D^b(Z_A) = \langle \operatorname{Ku}(Z_A), \mathcal{O}_{Z_A} , U_{Z_A}^{\vee}, \mathcal{O}_{Z_A} (1), U_{Z_A}^{\vee}(1)\rangle, $$
where $U_{Z_A}$ and $\mathcal{O} _{Z_A}(1)$ denote the pullbacks to $Z_A$ of the rank 2 tautological subbundle and Pl\"ucker line bundle on $G(2,5)$, see \cite[Proposition 2.3]{KP1}. The Kuznetsov component $\operatorname{Ku}(Z_A)$ of a $GM$ fourfold is a K3 category and depends only on the Lagrangian subspace $A$ as proven in \cite[Theorem 1.6]{KP2}. We denote it by $\operatorname{Ku}(A)$.
It is known \cite[Proposition~5.17]{PPZ} that for a very general choice of $A$ the double EPW sextic $\widetilde{X}_A$ is a moduli space of stable objects on $\operatorname{Ku}(A)$. As a corollary of Theorem \ref{thm:main}, we obtain in Section \ref{sec:hodge}  a similar statement about EPW cubes conjectured in \cite[Section 5.4.2]{PPZ}. 
 
 \begin{cor}\label{cor:stability}
For a very general Lagrangian $A\subset \bigwedge^3 W$, there exist a stability condition $\sigma$ on $\operatorname{Ku}(A)$ and a Mukai vector $v$ such that the moduli space $M_{\sigma}(\operatorname{Ku}(A),v)$ is a hyper-K\"ahler sixfold isomorphic to the double EPW cube $\widetilde{Y}_A$ corresponding to $A$.
 \end{cor}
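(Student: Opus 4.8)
The plan is to deduce the corollary from Proposition~\ref{thm:main} by combining it with the known results of \cite{PPZ} on moduli spaces of stable objects in the Kuznetsov component of Gushel--Mukai fourfolds. First I would recall the framework of \cite{PPZ}: for a GM fourfold $Z_A$, there is a stability condition $\sigma$ and a Mukai vector $v$ such that the moduli space $M_\sigma(\operatorname{Ku}(Z_A),v)$ is a smooth projective hyper-K\"ahler sixfold of $K3^{[3]}$-type. The period of this moduli space is governed by the Hodge structure on the primitive $H^4$ of $Z_A$, which by construction carries the polarised weight-two Hodge structure on the lattice $\Lambda$ of \eqref{eq:lattice}. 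The essential input from \cite{PPZ} is that the period point of $M_\sigma(\operatorname{Ku}(Z_A),v)$, as a point of $\mathbb{D}_Y$, coincides with the period point $\mathcal{P}_Y(A)$ of the double EPW cube $\widetilde{Y}_A$.

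The key step is then a Torelli-type argument. For a very general Lagrangian $A$, Proposition~\ref{thm:main} gives an isometry between the polarised integral Hodge structures $H^2(\widetilde{X}_A,\mathbb{Z})_{prim}$ and $H^2(\widetilde{Y}_A,\mathbb{Z})_{prim}$, and via Theorem~\ref{periods} this means the period of $\widetilde{Y}_A$ in $\mathbb{D}_Y$ agrees with the image under $i$ of the period of $\widetilde{X}_A$. Matching this with the period computation for the moduli space, I would conclude that $M_\sigma(\operatorname{Ku}(Z_A),v)$ and $\widetilde{Y}_A$ have the same period point in $\mathbb{D}_Y$. Since both are hyper-K\"ahler sixfolds of $K3^{[3]}$-type polarised by a class of the same numerical type (degree $4$, divisibility $2$), the global Torelli theorem for hyper-K\"ahler manifolds yields that they are isomorphic, at least as unpolarised manifolds, and the agreement of polarisation type upgrades this to the desired isomorphism for $A$ very general (so that the Picard rank is minimal and the Hodge isometry is induced by a unique isomorphism).

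The main obstacle will be ensuring that the identification of periods is genuinely an equality of points in the same period domain, rather than merely an abstract Hodge isometry, and that the global Torelli theorem applies cleanly. Concretely, I must verify that the Hodge isometry produced by Proposition~\ref{thm:main} is compatible with a polarisation of the correct numerical type on $\widetilde{Y}_A$, so that both $M_\sigma(\operatorname{Ku}(Z_A),v)$ and $\widetilde{Y}_A$ lie in the polarised period domain $\mathbb{D}_Y$ and map to the same point. A further subtlety is that the global Torelli theorem produces an isomorphism only up to the action of monodromy or up to birational equivalence in general; for very general $A$ the double EPW cube has minimal Picard rank, which rules out the pathological configurations and forces the Hodge isometry to be realised by an actual biregular isomorphism. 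I would handle this by invoking the refined Torelli results for the $K3^{[3]}$-type available in the locally complete family of double EPW cubes, as established in \cite{IKKR1}, to conclude that the period coincidence for very general $A$ forces $M_\sigma(\operatorname{Ku}(Z_A),v)\cong\widetilde{Y}_A$.
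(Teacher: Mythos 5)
Your overall strategy --- feed the \cite{PPZ} moduli construction into a period comparison and finish with polarized global Torelli --- is essentially the deduction the paper intends, since the corollary is presented there as a direct consequence of Proposition~\ref{thm:main} and Theorem~\ref{periods}. However, your first paragraph contains a genuine error that, taken literally, makes the argument circular. You write that ``the essential input from \cite{PPZ} is that the period point of $M_{\sigma}(\operatorname{Ku}(Z_A),v)$, as a point of $\mathbb{D}_Y$, coincides with the period point $\mathcal{P}_Y(A)$ of the double EPW cube.'' That is not what \cite{PPZ} prove; it is precisely their open problem, i.e.\ the statement the corollary is asserting. If this were available as an input, the corollary would follow from Torelli alone and Proposition~\ref{thm:main} would be superfluous. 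What \cite{PPZ} actually provide is that $H^2$ of the moduli space is Hodge-isometric to the orthogonal complement of $v$ in the Mukai lattice of $\operatorname{Ku}(Z_A)$, whose Hodge structure is governed by $H^4(Z_A,\mathbb{Z})_{prim}$; combined with the known Hodge isometry between $H^4(Z_A)_{prim}$ and $H^2(\widetilde{X}_A)_{prim}$ of the double EPW \emph{sextic} (\cite{IM}, \cite{P}), this identifies the period of $M_{\sigma}(\operatorname{Ku}(Z_A),v)$ with $i(\mathcal{P}_X(A))$, i.e.\ with the sextic side of the picture. The equality $i(\mathcal{P}_X(A))=\mathcal{P}_Y(A)$ is exactly the new content of Theorem~\ref{periods}. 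Your second paragraph in fact performs this matching correctly, so the logical skeleton is salvageable as written there; but you must replace the quoted sentence by the correct statement of the \cite{PPZ} input, and make the intermediate use of the Gushel--Mukai/EPW-sextic comparison \cite{IM} explicit, since otherwise ``the period computation for the moduli space'' is not anchored to either period domain.

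Two smaller points on the final step. The Torelli input you need is Verbitsky's global Torelli theorem in its polarized form (Markman) for $K3^{[3]}$-type, together with the uniqueness of the orbit of polarizations of degree $4$ and divisibility $2$ (Eichler) and the fact that birational hyper-K\"ahler manifolds of Picard rank one are isomorphic; this is where very generality of $A$ enters, as you correctly anticipate. But \cite{IKKR1} is not the right reference for this: it constructs the locally complete family of double EPW cubes and identifies their deformation type, and does not contain the refined Torelli statement you attribute to it.
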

Additionally, we analyze in Section \ref{s7} the impact of Theorem \ref{thm:main} on the description of the period domain $\mathbb{D}_Y$ of double EPW cubes and the  period map $$\mathcal{P}_Y\,\colon\,\, LG(10,\textstyle \bigwedge^3W) \dashrightarrow \mathbb{D}_{Y},$$
that associates to a general Lagrangian space the period of the corresponding double EPW cube. We note that the period map is well defined on the locus $LG(10,\bigwedge^3 W)\setminus \Sigma$, where $\Sigma$ is the locus of Lagrangian spaces that contain some totally decomposable 3-vectors.
 We start by relating the period domains of double EPW sextics and cubes, and use this to obtain a description, analogous to \cite{O3}, 
 of the hyper-K\"ahler sixfolds represented by points in $\mathbb D_Y$ for which are in the complement of the image of 
 $LG(10,\bigwedge^3 W)\setminus \Sigma$ by the above period map. In particular, we describe geometrically the corresponding degenerate EPW cubes.

The paper is organized as follows: in Section 2, we introduce notation used throughout the paper and fix a Lagrangian space $A$. Section 3 is devoted to the introduction of the variety $K_A$, the study of its maps to $X_A$ and $Y_A$ and of the correspondence $k_A$ between the cohomology of $\widetilde X_A$ and $\widetilde Y_A$ that they induce.
In Section 4, we use $K_A$ to describe and compare the reduced structures of moduli spaces of elliptic curves of minimal degree on $\widetilde X_A$ and $\widetilde Y_A$, thus proving Theorem \ref{moduli elliptic}. In Section 5 we prove Theorem \ref{thm:main} and Corollary \ref{cor:stability} using the variety $K_A$. In the last section, we deduce consequences of Theorem \ref{thm:main} for the period maps and describe the geometry of boundary cases. Finally, the Appendix contains a Macaulay2 script that is used in the proofs in Section 4.

\section*{Acknowledgements}
G.K.~is supported by the project Narodowe Centrum Nauki 2018/30/E/ST1/00530. M.K.~is supported by the project Narodowe Centrum Nauki 2018/31/B/ST1/02857. G.M.~is supported by the projects PRIN 2020KKWT53, PRIN 2022PEKYBJ and is a member of INdAM - GNSAGA and received support from it. The authors would like to thank Olivier Debarre, Atanas Iliev, Kieran G. O'Grady, Luca Migliorini, and Kristian Ranestad for useful discussions and comments, and Marcello Bernardara, Enrico Fatighenti, Laurent Manivel and Fabio Tanturri for letting us know about \cite{B+} and for useful discussions. We would also like to thank the referee for helpful comments and suggestions.
Funded by the European Union - NextGenerationEU under the National Recovery and Resilience Plan (PNRR) - Mission 4 Education and research - Component 2 From research to business - Investment 1.1 Notice Prin 2022 - DD N. 104 del 2/2/2022, from title "Symplectic varieties: their interplay with Fano manifolds and derived categories", proposal code 2022PEKYBJ – CUP J53D23003840006.
\section {Preliminaries}\label{sec:prel}

Let $W$ be a $6$-dimensional vector space and let $G(3,W)\subset \PP(\bigwedge^3 W)$ be the Grassmannian of 3-spaces in $W$. 
Let $A\subset \bigwedge^3 W$ be a Lagrangian subspace with respect to the conformal symplectic structure on $\bigwedge^3 W$ given by the wedge product. There are three hyper-K\"ahler manifolds associated to a general $A$, and all of them arise as the double covers of Lagrangian degeneracy loci of Lagrangian bundles.
Set
\begin{itemize}
\item $F_v=v\wedge \bigwedge^2 W \subset  \bigwedge^3 W$ for $v\in W$,
\item $F'_H= (H\wedge \bigwedge^2 W^{\vee})^{\perp}\subset \bigwedge^3 W$ for $H\in W^{\vee}$,
\item $T_U=\bigwedge^2 U\wedge W$ for $[U]\in G(3,W)$.
\end{itemize}
These represent fibers of Lagrangian subbundles $\mathcal F$, $\mathcal F'$, $\mathcal T$ of the trivial bundle with fiber $\bigwedge^3 W$ over $\mathbb{P}(W)$, $\mathbb{P}(W^{\vee})$ and $G(3,W)$ respectively. Note that $\PP(F_v)\cap G(3,W)$ is $G(2,5)$ embedded in $\mathbb{P}(F_v)$ via the Pl\"ucker embedding, whereas $\mathbb{P}(T_U) \cap G(3,W)$ is a cone $C_U$ over $\mathbb P^2\times \mathbb P^2$, which is the Schubert cycle in $G(3,W)$ of 3-spaces meeting $U$ in a $2$-dimensional space.  

We have the following degeneracy loci
\begin{enumerate}
\item $D^i_{A,\mathcal F} :=\{[v]\in \mathbb P(W)\mid\ \dim (F_v\cap A)\geq i \}$,
\item  $D^i_{A,\mathcal F'} :=\{[H]\in \mathbb P(W^{\vee})\mid\ \dim( F'_H\cap A)\geq i \}$.
\item  $D^i_{A,\mathcal T} :=\{[U]\in  G(3,W)\mid\ \dim( T_U\cap A)\geq i \}$. 
\end{enumerate}
We call $X_A:=D^1_{A,\mathcal F}$ the EPW sextic associated to $A$, $X_{A^{\vee}}:=D^1_{A,\mathcal F'}$ the dual EPW sextic associated to $A$ and
$Y_A:=D^2_{A,\mathcal T}$
the EPW cube associated to $A$.
 
From now on, we make the following assumption on $A$:
\begin{equation}\label{eq:Lagrangian_condition}
    D^3_{A,\mathcal F}=D^3_{A,\mathcal F'}=D^4_{A,\mathcal T}=\mathbb{P}(A)\cap G(3,W)=\emptyset.
\end{equation}
Notice that each condition fails in codimension one by \cite[Sections 1.4 and 1.5]{O3} and \cite[Lemma 3.6]{IKKR1}, so the set of Lagrangians $A$ that we are considering is an open and dense subset of the Lagrangian Grassmannian $LG(10,\bigwedge^3 W)$. We will follow the standard notations for these Lagrangians by denoting with $\Sigma$ the set of Lagrangians $A$ such that $\mathbb{P}(A)\cap G(3,W)\neq\emptyset$, $\Delta$ the set of Lagrangians $A$ such that $D^3_{A,\mathcal F}\neq\emptyset$ and $\Gamma$ the set of Lagrangians $A$ such that $D^4_{A,\mathcal{T}}\neq\emptyset$. 

By \cite[Theorem 1.1]{IKKR1} and \cite[Theorem 1.1]{O2}, there are canonical double covers of $X_A$, $X_{A^{\vee}}$ and $Y_A$ that are smooth hyper-K\"ahler manifolds: they are called respectively double EPW sextics and cubes, and we denote them by $\widetilde{X}_A$, $\widetilde{X}_{A^{\vee}}$ and $\widetilde{Y}_A$ respectively. The covering maps are denoted by $$p_X\,\colon\,\widetilde{X}_A \to X_A,$$ $$p_{X^{\vee}}\,\colon\,\widetilde{X}_{A^{\vee}} \to X_{A^{\vee}},$$ $$p_Y\,\colon\,\widetilde{Y}_A \to Y_A,$$ and the three covering involutions will be called $\iota_X,$ $\iota_{X^{\vee}}$ and $\iota_Y$ respectively. 

\section{The incidence correspondence}\label{sec:correspondence}
Let us denote by
$$ \Omega=\bigcup_{v\in W} \PP(F_v)=\bigcup_{H\in W^{\vee}} \PP(F'_H)
\textstyle
=\{[v\wedge \alpha] \mid  v\in W,  \alpha\in \bigwedge^2 W\}\subset \mathbb{P}(\bigwedge^3 W),$$ the $14$-dimensional variety which is the closure of the middle orbit of the representation $\mathbb{P}(\bigwedge^3 W)$ of $GL(W)$. Note $\Omega\setminus G(3,W)$ admits two fibrations by $\PP(F_v)\setminus G(3,W)$ with $v\in W$ and $\PP(F'_H)\setminus G(3,W)$ with $H\in W^{\vee}$. More precisely, for all $[\beta] \in \Omega\setminus G(3,W)$ there is exactly one $[v]\in \PP (W)$ and $[H]\in \PP( W^{\vee})$ such that $[\beta] \in \PP(F_v)$ and $[\beta] \in \PP(F'_H)$. In fact, for $\beta=v\wedge \alpha$ we have $[\beta]\in \mathbb P(F_{v})$ and $[\beta] \in \PP(F'_H)$ for $[H] =[v\wedge \alpha\wedge \alpha]\in \PP(\bigwedge^5 W)=\PP(W^{\vee})$.
Summing up, we have two fibrations:
\begin{align*}
    \phi_1\,\colon\, \Omega \setminus G(3,W)&\longrightarrow \mathbb{P}(W)\\ [v\wedge \alpha] &\longmapsto [v]
\end{align*} and 
\begin{align*}\phi_2\,\colon\, \Omega\setminus G(3,W)  &\longrightarrow \mathbb P(\textstyle \bigwedge^5 W)=\mathbb{P}(W^{\vee})\\
 [v\wedge \alpha] &\longmapsto [v\wedge \alpha\wedge \alpha]
\end{align*}
whose fibers are $\mathbb{P}^9\setminus G(2,5)$.
\\
We can then consider the variety:
$$K_A=\{([U],[\alpha])\in  Y_A\times  \mathbb{P}(\textstyle\bigwedge^3 W) \mid\ [\alpha] \in \PP(T_U)\cap \PP(A)\cap \Omega\}.$$
Note that $K_A$ admits the following three natural maps: the projection $$\pi_{1}\,\colon\, K_A\to Y_A\subset G(3,W),$$ and two maps $\psi_{1}$, $\psi_{2}$ obtained as compositions of the projection 
$$\pi_{2}\,\colon\, K_A\to \mathbb{P}(\textstyle\bigwedge^3 W),$$ whose image is contained in $\Omega\cap \PP(A)\subset \Omega\setminus G(3,W)$ (by assumption \eqref{eq:Lagrangian_condition}), with the fibrations $\phi_1$ and $\phi_2$.  

\begin{lem} \label{fibers pi} Let $A$ be a general Lagrangian subspace of $\bigwedge^3 W$. The variety $K_A$ is then  $6$-dimensional and the map $\pi_1$ is generically 3 to 1 onto $Y_A$. More precisely, for general $[U]\in Y_A$, the intersection $l_{U,A}:=\mathbb{P}(A)\cap \mathbb{P}(T_U) $ is a line and $R_U:=\mathbb{P}(T_U)\cap \Omega$ is a determinantal cubic hypersurface cone in $\mathbb{P}(T_U)$, which is the secant variety of the cone $C_U=\mathbb P(T_U)\cap G(3,W)$. In this case, the fiber $\pi_1^{-1}([U])$  consists of three points corresponding to three points of intersection of the line $l_{U,A}$ with the determinantal cubic $R_U$. 
\end{lem}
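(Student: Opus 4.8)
The plan is to study the three loci $l_{U,A}$, $R_U$ and $C_U$ inside the $\PP^9=\PP(T_U)$ separately, and then to read off the fibre of $\pi_1$ as the intersection of the line $l_{U,A}$ with the cubic $R_U$.

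First I would fix a general $[U]\in Y_A$ and record that $\dim(T_U\cap A)=2$. Indeed $Y_A=D^2_{A,\mathcal{T}}$, while the locus $D^3_{A,\mathcal{T}}$ where the intersection jumps to dimension $\geq 3$ is a proper closed subvariety (and $D^4_{A,\mathcal{T}}=\emptyset$ by \eqref{eq:Lagrangian_condition}); hence a general point of $Y_A$ lies in $D^2_{A,\mathcal{T}}\setminus D^3_{A,\mathcal{T}}$ and $l_{U,A}=\PP(T_U\cap A)$ is exactly a line.

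The heart of the argument is the identification of $R_U=\PP(T_U)\cap\Omega$. Choosing a splitting $W=U\oplus W''$ with $W''\cong W/U$, one gets $T_U=\bigwedge^2U\wedge W=\bigwedge^3U\ \oplus\ \bigl(\bigwedge^2U\otimes W''\bigr)$, a $1\oplus 9$ decomposition, and the isomorphism $\bigwedge^2U\cong U^*$ turns the second summand into $\Hom(U,W'')$, i.e. $3\times 3$ matrices. Writing $\beta\in T_U$ as a pair $(c,M)$ with $c$ the $\bigwedge^3U$-coordinate and $M\in\Hom(U,W'')$, I would prove that $\beta\in\Omega$, i.e. $v\wedge\beta=0$ for some $v\neq 0$, if and only if $\det M=0$. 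The key computation is that for $u\in U$ one has $u\wedge\beta=\mathrm{vol}_U\wedge M(u)$, so $u\wedge\beta=0$ precisely when $u\in\ker M$; thus a drop of rank of $M$ produces a linear factor and forces $\beta\in\Omega$. Conversely, when $M$ has full rank I would reduce to $M=\mathrm{id}$ using the $GL(U)\times GL(W'')$-action (which preserves both $\Omega$ and the splitting) and check explicitly that no $v\in W$ kills $\beta$, so $\det M\neq 0$ implies $\beta\notin\Omega$. Since $\det M$ does not involve $c$ — equivalently $u\wedge\mathrm{vol}_U=0$ for $u\in U$ — the locus $\{\det M=0\}$ is a cone with vertex $[\bigwedge^3U]$, which is the assertion that $R_U$ is a cubic hypersurface cone. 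Finally, the Segre $C_U$ is the rank-one locus and the determinantal cubic is the locus of matrices of rank $\leq 2$, so $R_U$ is the secant variety of $C_U$; this is the claim that $R_U$ is secant to $C_U$.

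With these facts the fibre is immediate: $\pi_1^{-1}([U])=l_{U,A}\cap R_U$ is the intersection of a line with a cubic in $\PP^9$, hence a length-three scheme as soon as $l_{U,A}\not\subset R_U$, and for general $[U]$ these are three distinct points. The main obstacle is exactly this last genericity, namely ruling out that the whole line $\PP(T_U\cap A)$ consists of partially decomposable forms for the general member of $Y_A$. I would settle it by a semicontinuity argument: exhibiting a single pair $(A,[U])$ — for instance a convenient torus-fixed $U$ together with an explicit Lagrangian — for which $l_{U,A}$ meets $R_U$ transversally in three reduced points. Since the condition $l_{U,A}\not\subset R_U$ is open on $Y_A$, one such example forces it to hold on a dense open subset, on which Bézout gives a fibre of length exactly three and hence shows that $\pi_1$ is dominant and generically $3:1$ onto $Y_A$.
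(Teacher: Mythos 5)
Your proposal is correct, but it reaches the key identification of $R_U$ by a genuinely different route than the paper. The paper's proof is orbit-theoretic: it recalls from \cite{IKKR2} that $C_U$ is the cone over the Segre $\mathbb{P}^2\times\mathbb{P}^2$, observes that the stabilizer of $[U]$ in $GL(W)$ acts on $\mathbb{P}(T_U)$ with exactly three orbits ($C_U$, its secant variety, and the open orbit), exhibits a representative $v_1\wedge v_2\wedge w_1+v_2\wedge v_3\wedge w_2=v_2\wedge(v_3\wedge w_2-v_1\wedge w_1)$ of the secant orbit to see that it lies in $\Omega$, and cites \cite{AOP} for the fact that the general point $v_1\wedge v_2\wedge w_1+v_2\wedge v_3\wedge w_2+v_1\wedge v_3\wedge w_3$ of $\mathbb{P}(T_U)$ is a rank-three form not in $\Omega$; this pins down $\Omega\cap\mathbb{P}(T_U)$ as the secant variety, i.e.\ the determinantal cubic cone. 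Your argument replaces both citations by a self-contained computation: the splitting $T_U\cong\bigwedge^3U\oplus\Hom(U,W'')$, the Koszul criterion $\beta\in\Omega\iff\exists v\neq 0,\ v\wedge\beta=0$, and the equivalence $\beta\in\Omega\iff\det M=0$ (with the reduction to $M=\mathrm{id}$ for the converse). This is more computational but buys you, in one stroke, the cubic equation, the cone structure (vertex $[\bigwedge^3U]$, since $\det M$ ignores $c$), and the secant-variety interpretation (rank $\leq 2$ locus as secant of the rank $\leq 1$ locus), with no black boxes. One caveat: your last step, ruling out $l_{U,A}\subset R_U$ and getting three \emph{reduced} intersection points, is only a promissory note — you would still have to produce the explicit pair $(A_0,[U_0])$, and to propagate it from one pair to a general $[U]\in Y_A$ for the fixed general $A$ you need irreducibility of the universal incidence $\{(A,[U]):[U]\in Y_A\}$ (true, since it fibres over $G(3,W)$ with irreducible degeneracy-locus fibres). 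To be fair, the paper's own proof is silent on exactly this genericity and treats it as implicit, and the paper handles analogous genericity claims elsewhere by Macaulay2 computations in its Appendix, which is precisely the kind of verification your semicontinuity step would require.
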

\begin{proof} 
Note that the dimension of $K_A$ follows from the description of the general fibers of $\pi_1$ once we observe that the fibers of $\pi_1$ are at most $1$-dimensional on the smooth locus of $Y_A=D^2_{A,\mathcal T}$ and at most $2$-dimensional over its singular locus $D^3_{A,\mathcal T}$, which is a threefold.
Recall from \cite[Lemma 3.5]{Don} that $C_U=G(3,W)\,\cap\, \mathbb{P}(T_U)$ is a cone over the Segre embedding of $\mathbb{P}^2\times \mathbb P^2$. Let us decompose $W=U\oplus V$ then $GL(U)\times GL(V)$ acts naturally on $\mathbb P(\bigwedge^3 W)=\mathbb P(\bigwedge^3 U \oplus \bigwedge^2 U \wedge V \oplus U \wedge \bigwedge^2 V \oplus \bigwedge^3 V)$ and preserves $\mathbb{P}(T_U)=\mathbb P(\bigwedge^3 U \oplus \bigwedge^2 U \wedge V)$ as well as all orbits of the action of $GL(W)$. In particular, $\Omega \cap \mathbb{P}(T_U)$ and $G(3,W)\cap \mathbb{P}(T_U)$ are preserved by this action. Furthermore, $[U]=\mathbb P(\bigwedge^3 U)$ is fixed by the action. Note also that the representation $\mathbb P(\bigwedge^2 U \wedge V)$ of $GL(U)\times GL(V)$ is isomorphic to $\mathbb P (\Hom(U,V))$. The latter is a standard representation of $GL(3)\times GL(3)$ with exactly three orbits determined by the rank loci of the maps in $\Hom(U,V)$. Under these identifications $C_U$ is identified with the cone over the locus of rank one maps  in $\mathbb P (\Hom(U,V))$. 
The secant variety of $C_U$ is then identified with the cone over the locus of equivalence classes of  rank two maps  in $\mathbb P (\Hom(U,V))$ i.e. is a cone over a determinantal cubic hypersurface.
Now, $R_U$ is also the closure of an orbit of the action. To prove that it is indeed the secant variety we just need to observe that it contains the secant variety of $C_U$ and is not the whole $\mathbb P(T_U)$.
For that note that a general $3$-vector corresponding to a point in the secant variety to $C_U$ can be written as $v_1\wedge v_2\wedge w_1+v_2\wedge v_3\wedge w_2=v_2\wedge (v_3\wedge w_2-v_1\wedge w_1)$ with $v_1,v_2,v_3\in U$,$w_1,w_2,w_3\in V$ and thus is a point in $\Omega$. On the other hand a $3$-vector representing a general point in $\mathbb{P}(T_U)$ is of the form $$v_1\wedge v_2\wedge w_1+v_2\wedge v_3\wedge w_2+v_1\wedge v_3\wedge w_3,$$ with $v_1,v_2,v_3\in U$,$w_1,w_2,w_3\in V$ forming together a basis of $W$. The latter is a $3$-vector whose class is not in $ \Omega$. This determines $R_U$ as the secant variety to $C_U$. It remains to observe that since $A$ is general we can assume $l_{U,A}$ meets $R_U$ transversely for some and hence for general $[U]\in Y_A$, which ends the proof. 
\end{proof}


\begin{lem}\label{fibers} Let $A$ be a general Lagrangian subspace of $\bigwedge^3 W$. The map $\psi_1$ (resp. $\psi_2$) is then a fibration whose image is $X_A $ (resp.~$X_{A^{\vee}} $) and whose general fibers are surfaces of general type, with even sets of 20 nodes. Moreover, each such surface is isomorphic to an intersection of a quadric and a quartic in $\mathbb{P}^4$.
\end{lem}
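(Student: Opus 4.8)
The plan is to understand the fibers of $\psi_1$ geometrically by fixing a point $[v]\in X_A$ and describing the locus in $\mathbb{P}(F_v)$ cut out by the correspondence. Recall that $\psi_1=\phi_1\circ\pi_2$, so the fiber $\psi_1^{-1}([v])$ parametrizes those $([U],[\alpha])\in K_A$ with $[\alpha]\in\mathbb{P}(F_v)$; equivalently, via $\pi_2$ it is identified with the set of decomposable three-forms $[\alpha]\in\mathbb{P}(F_v)\cap\mathbb{P}(A)\cap\Omega$ that lie in $\mathbb{P}(T_U)$ for some $[U]\in Y_A$. Since any $[\alpha]\in\Omega$ has the form $[v\wedge\gamma]$ for a well-defined $[v]$ (by the discussion of $\Omega\setminus G(3,W)$ in the Preliminaries), fixing $[v]$ amounts to choosing $[\gamma]\in\mathbb{P}(\bigwedge^2 W)$ modulo $v$, i.e.\ a point of $\mathbb{P}(\bigwedge^2(W/\langle v\rangle))\cong\mathbb{P}^9$. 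So I would first identify $\mathbb{P}(F_v)$ with $\mathbb{P}(\bigwedge^2(W/\langle v\rangle))$ and track how the three defining conditions $\mathbb{P}(A)$, $\Omega$, and ``$[\alpha]\in\mathbb{P}(T_U)$ for some $U$'' translate into equations there.

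The key computation is that, working inside $\mathbb{P}(\bigwedge^2(W/\langle v\rangle))\cong\mathbb{P}^9$, the condition $[v\wedge\gamma]\in\Omega$ becomes the Pfaffian/rank condition that $\gamma$ be a \emph{decomposable} two-form modulo $v$, i.e.\ that $[\gamma]$ lie on the Grassmannian $G(2,W/\langle v\rangle)\subset\mathbb{P}^9$. Meanwhile $\mathbb{P}(F_v)\cap\mathbb{P}(A)$ is a linear subspace whose expected dimension I would pin down using the genericity assumption \eqref{eq:Lagrangian_condition} (which forces $\dim F_v\cap A\le 2$ for general $[v]\in X_A$, with equality exactly on $X_A$), so that this linear section has the right codimension. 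I expect the fiber to be the intersection of $G(2,5)\subset\mathbb{P}^9$ with a suitable linear space; cutting the Plücker $G(2,5)$ by the appropriate number of hyperplanes coming from $A$ should realize the general fiber, after the further incidence with $Y_A$ is imposed, as a surface sitting in a $\mathbb{P}^4$. To see it as a $(2,4)$ complete intersection I would restrict the Plücker quadrics and the cubic (secant) equations governing membership in some $\mathbb{P}(T_U)$ to this $\mathbb{P}^4$: the Grassmannian relations give the quadric, and the condition that a decomposable form lie on the secant cubic $R_U$ of Lemma \ref{rem:indet} (equivalently, that $[U]\in Y_A$ exists with $[\alpha]\in\mathbb{P}(T_U)$) should cut out the quartic.

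With the surface realized as a smooth-off-the-nodes $(2,4)$ complete intersection $S\subset\mathbb{P}^4$, the canonical and nodal statements follow from standard adjunction and double-cover considerations. By adjunction on $\mathbb{P}^4$, a complete intersection of a quadric and a quartic has $\omega_S=\mathcal{O}_S(2+4-5)=\mathcal{O}_S(1)$, so $S$ is canonically polarized (a canonical surface) of the expected invariants; I would record $K_S^2$ and $\chi$ from the complete-intersection formulas. The nodes arise because the surface is a fiber of the correspondence over $X_A$ whose source $K_A$ meets the ramification of the covers, and the \emph{even set} structure is forced by the fact that $K_A$ lifts to the double cover $\widetilde{X}_A$ (as used in the sketch of Proposition \ref{thm:main}): the $20$ nodes are the points where the fiber meets the branch locus, and evenness is the statement that the associated double cover is precisely the étale-in-codimension-one cover coming from $p_X$. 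Counting the nodes as $20$ is the numerically delicate point: I would obtain it by intersecting the fiber class with the singular locus $\sing X_A$ (which by Remark \ref{rem:indet} sits inside the EPW cube) and checking the count against the even-set constraint $20\equiv 0$ compatible with a half-integral class on a $(2,4)$ surface.

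\textbf{Main obstacle.} The hardest part will be the precise identification of the general fiber as a $(2,4)$ complete intersection together with the exact node count of $20$. Translating ``$[\alpha]\in\mathbb{P}(T_U)$ for some $[U]\in Y_A$'' into a single quartic on the relevant $\mathbb{P}^4$ requires carefully eliminating $U$ and verifying that the resulting equation is genuinely of degree four and transverse to the quadric for general $[v]$; and pinning the number of nodes to exactly $20$ demands a careful local analysis of where the fiber meets the ramification locus of $p_X$, rather than a purely numerical adjunction argument.
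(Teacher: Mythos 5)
Your proposal goes wrong at the very first step, the identification of the fiber, and the error propagates through everything that follows. For a general (smooth) point $[v]\in X_A$ one has $\dim(F_v\cap A)=1$ exactly (the locus where $\dim(F_v\cap A)\ge 2$ is the singular surface of $X_A$, and $D^3_{A,\mathcal F}=\emptyset$ by the standing assumption on $A$), so $\mathbb{P}(F_v)\cap\mathbb{P}(A)$ is a \emph{single point} $p$, not a positive-dimensional linear space. Consequently $\pi_2$ collapses the whole fiber $\psi_1^{-1}([v])$ to that one point: the fiber equals $\{([U],p)\in Y_A\times\{p\}\ :\ p\in\mathbb{P}(T_U)\}$, and its two-dimensional geometry lives entirely in the $U$-direction, inside $G(3,W)$, not inside $\mathbb{P}(F_v)$. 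Your plan to exhibit the fiber as a linear section of $G(2,5)\subset\mathbb{P}(F_v)\cong\mathbb{P}^9$ therefore cannot produce a surface: the relevant set $\mathbb{P}(F_v)\cap\mathbb{P}(A)\cap\Omega$ is just $\{p\}$. (You also conflate $\Omega$ with $G(3,W)$ here: every element of $\mathbb{P}(F_v)$ is of the form $[v\wedge\gamma]$ and hence already lies in $\Omega$, so there is no Pfaffian condition to impose; decomposability of $\gamma$ modulo $v$ characterizes the smaller locus $\mathbb{P}(F_v)\cap G(3,W)$.)

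What the paper actually does is work on the Grassmannian side. Writing $p=[v\wedge\alpha]$ with $\alpha$ nondegenerate, the condition $p\in\mathbb{P}(T_U)$ is equivalent to $v\in U$ and $\alpha\wedge\bigwedge^3 U=0$, which cuts out a three-dimensional quadric $Q_p$, a hyperplane section of $G(v,3,V_{v,\alpha})\cong G(2,4)$; so the quadric in the $(2,4)$ description is $Q_p$ itself, not a Pl\"ucker relation of $G(2,5)$. The fiber is $Q_p\cap Y_A$, and on $Q_p$ the condition $\dim(T_U\cap A)\ge 2$ defining $Y_A$ becomes a \emph{first} Lagrangian degeneracy locus (one jump being already accounted for by $p\in T_U\cap A$), whose class is $c_1(\bar{\mathcal T}^{\vee})=c_1(\mathcal T^{\vee})=4H$: that is where the quartic comes from, not from eliminating $U$ against the secant cubics $R_U$, which is not the relevant mechanism and would not obviously give degree four. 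Finally, the nodes are the points where the degeneracy jumps once more, i.e.\ the points of $Q_p\cap D^3_{A,\mathcal T}$ (the singular locus of $Y_A$); they have nothing to do with $\sing X_A$ or with the ramification of $p_X$ as you propose. Their number, $20$, is computed by the Pragacz--Ratajski formula via Schubert calculus on $G(3,W)$, and the evenness is witnessed by the restriction of the double cover $p_Y\colon\widetilde{Y}_A\to Y_A$, which is branched exactly at those $20$ nodes with smooth total space. So both points you flagged as the ``main obstacle'' are handled by degeneracy-locus machinery that your setup, being located in the wrong projective space, cannot access.
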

\begin{proof} Let $p\in K_A$. Then by definition $\pi_2(p)\in \Omega \cap \mathbb{P}(A) $, hence $\pi_2(p)\in \mathbb{P}(A)\cap \mathbb{P}(F_{\phi_1(\pi_2(p))})$. It follows that $\psi_1(p)\in X_A$. 

Consider now a smooth point $v\in X_A$, which under our assumption on $A$ means in particular that $v\in D^1_{A,F}\setminus D^2_{A,F}$. Then there is a unique point of intersection 
$p:=\mathbb{P}(A)\cap\mathbb{P}(F_v)$. Now, the fiber over $v$ is $$\psi_1^{-1}(v)=\{([U], p)\in Y_A\times \{p\} \mid \ p\,\in \mathbb{P}(T_U)\}.$$ 
Since $p$ is uniquely determined by $v$, the fiber $\psi_1^{-1}(v)$ is isomorphic to its projection $\pi_1(\psi_1^{-1}(v))\subset Y_A\subset G(3,W)$.  To describe this image, let  us first describe 
$$Q_p:=\{[U]\in G(3,W) \mid \ p\in \mathbb P (T_U)\}.$$
Since $p\in \mathbb P(F_v)\setminus G(3,W) \subset \Omega\setminus G(3,W)$, we can write $p=[v\wedge \alpha]$ with $\alpha\in \bigwedge^2 W$ non-degenerate. Observe that 
\begin{equation}\textstyle \label{pinTU}[v\wedge \alpha]\in \mathbb{P}(T_U)\iff  v\in U \text{ and }\alpha\wedge \bigwedge^3 U=0.\end{equation}
Now, if we denote by $V_{v,\alpha}$ the 5-space corresponding to the $5$-vector $v\wedge \alpha\wedge\alpha$, then for a fixed $p$, the condition (\ref{pinTU}) describes $Q_p$ as a $3$-dimensional quadric obtained as 
a hyperplane section of the variety $G(v, 3, V_{v,\alpha} )\cong G(2,4)$ of 3-spaces containing $v$ and contained in $V_{v,\alpha}$. 

Now, $\psi_1^{-1}(v)=Q_p\cap Y_A$, which means that it is described on $Q_p$ as a Lagrangian degeneracy locus coming from restricting to $Q_p$ the degeneracy locus description of $Y_A$. To understand the latter restriction observe that $p$ defines a trivial subbundle $P \subset \mathcal T$ whose fibers over each [U] is the $1$-dimensional space $\langle p\rangle $ spanned by $p$. We hence get a bundle $\bar{\mathcal T}=\mathcal T/P$ Lagrangian in the $18$-dimensional space $\langle p\rangle ^{\perp}/\langle p\rangle $ with symplectic form induced from $\bigwedge^3 W$. Note that $A/\langle p\rangle $ is then a general Lagrangian subspace of $\langle p\rangle ^{\perp}/\langle p\rangle $. Indeed, each Lagrangian subspace of $\langle p\rangle ^{\perp}/\langle p\rangle $ extends to a Lagrangian subspace of $\bigwedge^3 W$ containing  $\langle p\rangle $.  
Now, since $\langle p\rangle \subset T_U$  the condition $\dim (A\cap T_U)\geq 2$ defining $Y_A$ as a second degeneracy locus, after restriction to $Q_p$ describes the first degeneracy locus associated to $\bar{\mathcal{T}}$. It follows that $Q_p\cap Y_A$ is the first Lagrangian degeneracy locus $D^1_{\bar A, \bar{\mathcal T}}$ associated to the bundle $\bar {\mathcal T}$ and a general Lagrangian subspace $\bar A:=A/\langle p\rangle $ of $\langle p\rangle ^{\perp}/\langle p\rangle $.

Note that generality implies in particular that this first degeneracy locus is not the whole $Q_p$ hence it is a hypersurface in $Q_p$ as expected. 
Since $c_1 (\bar{\mathcal{T}}^{\vee})=c_1 (\mathcal T^{\vee})=4H$, where $H$ is the Pl\"ucker hyperplane section,   we obtain that $Q_p\cap Y_A$ is a quartic section of $Q_p$. Consequently, $\psi_1^{-1}(v)=Q_p\cap Y_A$ is isomorphic to the intersection of a quadric and a quartic.

We now claim that $Q_p\cap Y_A= D^1_{\bar A, \bar{\mathcal T}}$ is a nodal surface. Indeed $\bar{\mathcal T}$ gives an embedding of $Q_p$ inside $LG(9, \langle p\rangle ^{\perp}/\langle p\rangle )$. We will denote the image of that embedding by $\bar Q_p$. Furthermore a general choice of $\bar{A}$ determines for each $i\geq 1$ a universal degeneracy locus $\mathbb D^i_{\bar{A}}$ on $LG(9, \langle p\rangle ^{\perp}/\langle p\rangle )$ 
associated to the universal subbundle on $LG(9, \langle p\rangle ^{\perp}/\langle p\rangle )$. Different choices of $\bar A$ give translates of $\mathbb D^i_{\bar{A}}$ by the transitive action of the symplectic group $\operatorname{Sp}(\langle p\rangle ^{\perp}/\langle p\rangle)$ on $LG(9, \langle p\rangle ^{\perp}/\langle p\rangle )$ and hence from the Kleiman-Bertini theorem we deduce that for $\bar A$ general the intersections of $\bar{Q}_p$ with all degeneracy loci $\mathbb D^i_{\bar A, \bar{\mathcal T}}$ are transversal. But these intersection are isomorphic to the corresponding degeneracy loci $D^i_{\bar A, \bar{\mathcal T}}$. It follows that $D^1_{\bar A, \bar{\mathcal T}}$ is a quartic section of $Q_p$ that is smooth outside a finite set of points $D^2_{\bar A, \bar{\mathcal T}}$ and $D^3_{\bar A, \bar{\mathcal T}}=\emptyset$. To compute the number of singular points we use the formula of Pragacz and Ratajski \cite[Theorem 2.1]{PR} applied to $\bar{\mathcal{T}}$ restricted to $Q_p$. This is done using Schubert calculus on $G(3,W)$ and gives 20. 

Finally, the double cover $\widetilde{Y}_A\to Y_A$ restricts to the preimage of $Q_p\cap Y_A$ as the double cover $\widetilde{Q_p\cap Y_A}\to Q_p\cap Y_A$ branched in the 20 singular points obtained from the general construction of double covers of Lagrangian degeneracy loci described in \cite[Section 4]{DK double covers}. We conclude by \cite[Corollary 4.8]{DK double covers}  that  $\widetilde{Q_p\cap Y_A}$ is smooth and the singularities of $Q_p\cap Y_A$ are nodes. 
\end{proof}
\begin{rem}
By Lemma \ref{fibers} the incidence correspondence $K_A$ provides a $4$-dimensional family $\{S_{x}\}_{x\in X_A}$ of surfaces on $Y_A$ parametrized by $X_A$.
 The preimage $\widetilde S_x$ by the double cover $\widetilde Y_A\to Y_A$ of a general surface $S_x$ in the family is a smooth surface whose canonical system defines the double cover of a 20 nodal complete intersection of a quadric and quartic branched at the nodes. Clearly the family $\{\widetilde S_x\}_{x\in X_A}$ dominates the double EPW cube $\widetilde Y_A$. Interesting special examples of such $20$ nodal surfaces were recently studied in a different context in \cite{B+}. 
 Note that such surfaces are rather special \cite{CPT} and are called canonical covers. 
 Recall for instance that the Schoen surface \cite{CMR} is the double cover of a complete intersection of a quadric and a quartic singular at $40$ nodes.
\end{rem}

We will consider the following diagram, where $\mu_X\,\colon\,\overline{X}_A\to X_A$, $\mu_Y\,\colon\,\overline{Y}_A\to X_A$ and $\mu_K\,\colon\,\widehat{K}_A\to K_A$ are resolution of singularities, and $\widehat{X}_A$ is obtained as a resolution of the fibre product of the leftmost square (similarly for $\widehat{Y}_A$ and the rightmost square), and maps starting from $\widehat{X}_A$ and $\widehat{Y}_A$ are the canonical ones of the fibre product. In the diagram, we replaced any resolution $\overline{K}_A$ of $K_A$ with the resolution of several fibre products (so that we obtain a smooth scheme which maps both to $\overline{X}_A$ and $\overline{Y}_A$), and $\widehat{\pi}_1$ and $\widehat{\psi}_1$ are obtained in this way.

\begin{equation}\label{diagram}
    \begin{tikzpicture}
  \matrix (m) [matrix of math nodes,row sep=3em,column sep=4em,minimum width=2em]
  {
      \widehat{X}_A& \overline{X}_A & \widehat{K}_A & \overline{Y}_A & \widehat{Y}_A \\
      \widetilde{X}_A & X_A & K_A & Y_A & \widetilde{Y}_A\\};

  \path[-stealth]
    (m-1-1) edge node [above] {$\widehat{p}_X$} (m-1-2)
    (m-1-1) edge node [right] {$\widetilde{\mu}_X$} (m-2-1)
    (m-2-1) edge node [above] {$p_X$} (m-2-2)
    (m-1-2) edge node [right] {$\mu_X$} (m-2-2)
    (m-1-3) edge node [above] {$\widehat{\psi}_1$ } (m-1-2)
            edge node [right] {$\mu_K$ }(m-2-3)
            edge node [above] {$\widehat{\pi}_1$ } (m-1-4)
    (m-2-3) edge node [above] {$\psi_1$} (m-2-2)
            edge node [above] {$\pi_1$ } (m-2-4)
    (m-1-4) edge node [right] {$\mu_Y$ } (m-2-4)
    (m-1-5) edge node [above] {$\widehat{p}_Y$ } (m-1-4)
            edge node [right] { $\widetilde{\mu}_Y$ } (m-2-5)
    (m-2-5) edge node [above] {$p_Y$ } (m-2-4);
\end{tikzpicture}
\end{equation}

By assumption \eqref{eq:Lagrangian_condition} on $A$, both $\widetilde{X}_A$ and $\widetilde{Y}_A$ are smooth,

\begin{prop} \label{prop:corr} Let $A_0$ be a general Lagrangian corresponding to a point in $LG(10, \bigwedge^3 W)\setminus (\Sigma\cup \Delta \cup \Gamma)$.
Then there exists a contractible open set $\mathcal {U}_0$ (in the standard topology), neighbourhood of $A_0$ contained in $LG(10, \bigwedge^3 W)\setminus (\Sigma\cup \Delta \cup \Gamma)$ over which there exists universal families $$\pi_{X,{U}}: \widetilde{\mathcal X}_{{U}} \to {U} $$
$$\pi_{Y,{U}}: \widetilde{\mathcal Y}_{{U}} \to {U}$$ 
of double EPW sextics and double EPW cubes respectively and a nowhere vanishing correspondence $$k_{{U}}: R^4{\pi_{X,{U}}}_* \mathbb Q\to R^4{\pi_{Y,{U}}}_* \mathbb Q$$ such that for all $A\in \mathcal {U}_0$
the map $k_A$ induced on the fibers over $A$ by $k_{{U}}$
is  a map of rational Hodge structures $H^4(\widetilde{X}_A,\mathbb{Q})\to H^4(\widetilde{Y}_A,\mathbb{Q})$\footnote{The map, being induced by pullbacks and pushforwards of maps between smooth schemes is actually a map of integral Hodge structures, but integrality will be in any case lost in Lemma \ref{lem:H4toH2}, so we keep rational coefficients}. 
\end{prop}

\begin{proof} We start by finding an open subset $\mathcal U_0$ such that we can produce Diagram (\ref{diagram}) relatively over $ \mathcal U_0$. Let us start with $\mathcal U_0$ a small contractible open neighborhood of $A_0$ in  $LG(10, \bigwedge^3 W)\setminus (\Sigma\cup \Delta \cup \Gamma)$
such that the families $\mathcal X_{ U}:=\{X_A\}_{A\in  U}$, $\mathcal Y_{ U}:=\{Y_A\}_{A\in  \mathcal U_0}$, $\mathcal K_{ U}:=\{K_A\}_{A\in \mathcal U_0}$ are all locally trivial. This can be done since $A_0$ is general.   We can then apply \cite[Lemma 4.9]{BL} to produce a simultaneous resolution of the families $\mathcal{X}_{{U}}, \mathcal{Y}_{{U}}$ and $\mathcal{K}_{{U}}$. As the covers $p_X$ and $p_Y$ are uniquely determined for $A\in \mathcal U_0$, we also obtain families of double covers $\widetilde{\mathcal{X}}_{{U}}, \widetilde{\mathcal{Y}}_{ U}$, which in turn gives us fiber products in families. In this way we get the leftmost and rightmost square in the diagram. Next, whenever for the diagram we need to perform a resolution of singularities we can shrink $\mathcal U_0$ further and use generality of $A_0$ to get local triviality of the resolved family.

We hence have Diagram \ref{diagram} defined relatively over $\mathcal U_0$. We then define  $$k_{{U}}:=\widetilde{\mu}_{Y*} \circ \widehat{p}^*_Y\circ \widehat{\pi}_{1*}\circ {\widehat{\psi}_1}^* \circ \widehat{p}_{X*} \circ \widetilde{\mu}_X^*,$$ which is a map of local systems.
The maps are well defined as we are only dealing with relatively smooth schemes. Therefore the correspondence $k_{{U}}$ exists and is fiberwise a map of rational Hodge structures. It is nowhere vanishing by Lemma \ref{nontrivial} below. Indeed, the lemma proves that a $(4,0)$-form in the fiber over $A_0$ has non-zero image and hence by possibly shrinking $\mathcal {U}_0$ we get that $k_{U}$ is non-zero for each fiber over $\mathcal {U}_0$. 
\end{proof}

\begin{lem}\label{nontrivial} Let $\sigma$ be a volume form on the smooth part of $X_A$ and $\bar \sigma$ its pushforward to $X_A$. Then $\sigma'=\mu_{Y*}\widehat{\pi}_{1*} \widehat{\psi}_1^* \mu_X^* \bar{\sigma} $ restricted to the smooth part of $Y_A$ is a non-trivial $4$-form.
\end{lem}
\begin{proof} 
As we are only dealing with $\widehat{\pi}_1$ and $\widehat{\psi}_1$, let us denote them by $\pi$ and $\psi$ in this Lemma. Moreover, our computation is done on general points of $X_A$ and $Y_A$, so that $\mu_X$ and $\mu_Y$ are the identity, and we omit them from the notation. 
Let us choose a general point  $x\in Y_A$.  We will evaluate the form $\sigma'$ in a special $4$-vector $r_1\wedge r_2\wedge r_3\wedge r_4$ on the tangent space of  $Y_A$ in $x$. We have $$\sigma'(r_1\wedge r_2\wedge r_3\wedge r_4)=\sum_{p\in \pi^{-1}(x)} \sigma({\psi_p}_*({\pi^*}_p(r_1))\wedge{\psi_p}_* ({\pi^*}_p(r_2))\wedge{\psi_p}_*({\pi^*}_p(r_3))\wedge{\psi_p}_* ({\pi^*}_p(r_4))),$$
where ${\psi_p}_*$ is the tangent map to $\psi$ in $p$
whereas $\pi_p^*$ is the inverse of the tangent map to $\pi$ in $p$. The latter is well defined since $\pi$ is a local isomorphism around $p$. In fact, by the generality assumption and Lemma \ref{fibers pi} we have $\pi^{-1}(x)=\{p_1, p_2, p_3\}$ for some $p_1, p_2 , p_3\in U\subset K_A$, where $U$ is an open subset of $\widehat{K}_A$ in which $\mu$ is an isomorphism. Consider the three surfaces $S_i$ for $i=1,2,3$ defined as $\pi(\psi^{-1}(\psi(p_i)))$. These surfaces locally around $x$ are exactly the surface described in Lemma 
\ref{fibers}. We claim that any two of the three surfaces are in general transversal in $x$ i.e. their projective tangent spaces meet only in $x$. Indeed, as we have seen in the proof of Lemma \ref{fibers}, these surfaces are obtained as intersections of $Y_A$ with quadric threefolds $Q_{p_i}$ for some $p_i$ on the projective tangent space to $G(3,W)$ in $x$. The latter quadric threefolds are general hyperplane sections of $G(2,4)$, that is there are three Grassmannians $G_{p_i}\subset G(3,W)$ which themselves meet pairwise in lines. We conclude by observing that by the generality assumption the hyperplane sections defining $Q_{p_i}$ on $G_{p_i}$ do not contain the lines $G_{p_i}\cap G_{p_j}$ for $j \in\{1,2,3\}$,  $j\neq i$.

Let us now  choose two of the surfaces $S_i$ and call them $S_1$ and $S_2$. We now choose $r_1$ and $r_2$ general vectors in the tangent space of $Y_A$ in $x$, which are tangent to  $S_1$ and $S_2$ respectively. Then  ${\pi^*}_{p_1}(r_1)$ is tangent to the fiber $\psi^{-1}(\psi(p_1))$, hence $ {{\psi}_{p_1}}_*({\pi^*}_{p_1}(r_1))=0$. Similarly,  $ {{\psi}_{p_2}}_*({\pi^*}_{p_2}(r_2))=0$. Finally, $$\sigma'(r_1\wedge r_2\wedge r_3\wedge r_4)= \sigma({\psi_{p_3}}_*({\pi^*}_{p_3}(r_1))\wedge{{\psi}_{p_3}}_* ({\pi^*}_{p_3}(r_2))\wedge{\psi_{p_3}}_*({\pi^*}_{p_3}(r_3))\wedge{{\psi}_{p_3}}_* ({\pi^*}_{p_3}(r_4))).$$ The latter is non-zero. Indeed,  by generality of choice of $r_1$ and $r_2$ and transversality of both $S_1$ and $S_2$ with $S_3$, the plane $\langle r_1, r_2\rangle$ meets the tangent to $S_3$ only in $0$ and hence $\langle {\pi^*}_{p_3}(r_1),{\pi^*}_{p_3}(r_2)\rangle $ is not contracted via ${\psi_{p_3}}_*$. Now, if we choose $r_3$ and $r_4$  two general vectors tangent to $Y_A$ in $x$, then since $\psi$ is locally a submersion around $p_3$ and $\pi$ a local isomorphism, the vectors ${\psi_{p_3}}_*(\pi^*_{p_3}(r_3))$ and ${\psi_{p_3}}_*(\pi^*_{p_3}(r_4))$ are general vectors in the tangent space to $X_A$ in $\psi(p_3)$. Hence, from the fact that $\sigma$ is a volume form on the smooth locus of $X_A $, we get  $\sigma'(r_1\wedge r_2\wedge r_3\wedge r_4)\neq 0$ and hence $\sigma' $ is non-trivial.
\end{proof}



\begin{section}{Elliptic curves of minimal degree}\label{sec elliptic}
In this section, we describe the moduli spaces $M_{1,0}(\widetilde X_A, \beta_X)$ and $M_{1,0}(\widetilde Y_A, \beta_Y)$  of elliptic curves of minimal degree on the double EPW sextic $\widetilde{X}_A$ and the double EPW cube $\widetilde{Y}_A$ (cf.~\cite{NOb}). In both cases, these elliptic curves are preimages of special lines contained in the EPW variety which have the property that they meet its singular locus in four points. We can then compare the moduli spaces by means of the correspondence $$K_A=\{([U],[\alpha])\in  Y_A\times  \mathbb{P}(\textstyle\bigwedge^3 W) \mid\ [\alpha] \in \PP(T_U)\cap \PP(A)\cap \Omega\}$$ defined already in Section \ref{sec:correspondence} which via its projections relates the  EPW sextic $X_A$ with the EPW cube $Y_A$. 
More precisely, the correspondence $K_A$ gives a way to associate to a general point $x\in X_A$ a nodal surface $S_x$ contained in $Y_A$. We will see that for a line $l\subset X_A$ corresponding to an element of $M_{1,0}(\widetilde X_A, \beta_X)$ the intersection of all surfaces $S_x$ for $x\in l$ is a line $L_l\subset Y_A$ giving an element of $M_{1,0}(\widetilde Y_A, \beta_X)$.


Let us first consider the following family of lines in $\Omega\setminus G(3,W)$. Recall that $\Omega\setminus G(3,W)$ admits two fibration structures $\pi_1$ and $\pi_2$ over $\mathbb{P}(W)$ and $\mathbb{P}(W^{\vee})$ respectively. Let $\mathcal Z\subset G(2,\bigwedge^3 W)$ be the set of lines in $\Omega\setminus G(3,W)$ that project to lines via both fibrations $\phi_1\,\colon\, \Omega\setminus G(3,W)\to \mathbb P(W)$ and $\phi_2\,\colon\, \Omega\setminus G(3,W)\to \mathbb P(W^{\vee})$. Finally, for a general Lagrangian subspace $A$ of $\bigwedge^3 W$, let $\mathcal Z_A\subset G(2,\bigwedge^3 W)$ be the set of lines from $\mathcal Z$ which are contained in $\mathbb{P}(A)$. Note that when $\mathbb P(A)\cap G(3,W)=\emptyset$ the set $Z_A$ is closed.

\begin{lemma}\label{description of l}
A line $l\in \mathcal Z$ is of the form
\begin{equation}\label{l def}
l=\{[(\lambda_1 v_1+\lambda_2 v_2)\wedge \alpha]\in \mathbb P(\textstyle \bigwedge^3 W) \mid\ \lambda_1, \lambda_2 \in \mathbb C\},
\end{equation} 
for some $v_1,v_2\in W$ with $v_1\wedge v_2\neq 0$ and $\alpha\in \bigwedge^2 W$ of maximal rank. Conversely, for general $\alpha\in \bigwedge^2 W$, $v_1,v_2\in W$,  the line $l$ defined by (\ref{l def}) is an element of $\mathcal{Z}$. In particular, $\mathcal Z$ is unirational and irreducible. 
\end{lemma}
\begin{proof}
Let $l$ be spanned by the classes of two $3$-vectors $\eta_1, \eta_2\in \bigwedge^3 W$. Since $l\subset \Omega$ we have $\eta_1=v_1\wedge \alpha_1$ and $\eta_1=v_2\wedge \alpha_2$ for some $v_1,v_2\in W$ and $\alpha_1,\alpha_2 \in \bigwedge^2 W$. Clearly $v_1$, $v_2$ are then linearly independent since otherwise $l$ would be contracted by the fibration $\pi_1$. We can now extend $(v_1,v_2)$ to a basis $(v_1,v_2,v_3,v_4,v_5,v_6)$ of $W$ and choose $\alpha_1$, $\alpha_2$ such that 
$$\alpha_1=v_2\wedge w_1+ \beta_1, \quad \alpha_2=v_1\wedge w_2+ \beta_2,$$ 
where $w_1,w_2\in W$ and $\beta_1,\beta_2\in \bigwedge^2 \langle v_3,v_4,v_5,v_6\rangle$. Note also that since $l\cap G(3,W)=\emptyset$, we have $\beta_1,\beta_2\neq 0$.
Consider now a general linear combination $\lambda_1 \alpha_1+\lambda_2 \alpha_2$. By assumption, for each $(\lambda_1:\lambda_2)\in \mathbb P^1$ there exists $(\delta_1:\delta_2)\in \mathbb P^1$ such that 

$$(\delta_1 v_1+\delta_2 v_2)\wedge (\lambda_1 \alpha_1+\lambda_2 \alpha_2)=0.$$
But this, after further multiplication respectively by $v_1$ and $v_2$ gives:
$$ \delta_2 v_1\wedge v_2\wedge (\lambda_1 \beta_1+\lambda _2 \beta_2) =0     $$
and 
$$ \delta_1 v_1\wedge v_2\wedge (\lambda_1 \beta_1+\lambda _2 \beta_2)=0,$$
which implies that $\beta_1$ and $\beta_2$ are linearly dependent. Hence, by possibly rescaling one of the $2$-vectors we can assume $\beta_1=\beta_2$ and denote it by $\beta$. Then by putting
$$\alpha:=v_1\wedge w_2+v_2\wedge w_1+ \beta,$$
we get $$\eta_1=v_1\wedge\alpha, \quad \eta_2=v_2\wedge\alpha.$$
To see that we can choose $\alpha$ of maximal rank, observe that 
since the projection $\phi_2(l)$ of $l$ in $\mathbb P(W^{\vee})=\mathbb P(\bigwedge^5 W)$ is spanned by $\alpha\wedge \alpha \wedge v_1$, $\alpha\wedge\alpha \wedge v_2$ and is a line by definition of $\mathcal Z$, we infer that $\alpha\wedge \alpha \wedge v_1$, $\alpha\wedge\alpha \wedge v_2$ are linearly independent. It follows that $\alpha$ cannot be of rank 2. Notice furthermore that if $\alpha$ is of rank 4 then $\alpha\wedge \alpha$ is a non-zero totally decomposable $4$-form. Now $\alpha\wedge \alpha \wedge v_1$ and $\alpha\wedge\alpha \wedge v_2$ are non-zero totally decomposable linearly independent $5$-form, which implies $\alpha\wedge \alpha\wedge v_1\wedge v_2 \neq 0$. It follows that for $ \alpha':=\alpha + v_1\wedge v_2 $ we have $\alpha'\wedge \alpha'\wedge \alpha'=\alpha\wedge \alpha \wedge v_1\wedge v_2\neq 0$ and hence $\alpha'$ is of rank 6 as needed and we still have  $ \eta_1=v_1\wedge \alpha', \eta_2=v_2\wedge \alpha'$ as before. 
\end{proof}
\begin{rem}\label{ZI}
 Let us define the locus $\mathcal Z_I\subset \mathcal Z$ of lines which are isotropic with respect to the conformal symplectic form on $\bigwedge^3 W$. These are given by the additional condition $v_1\wedge v_2\wedge \alpha \wedge \alpha =0$. This equation defines an irreducible hypersurface in the open set of triples 
 $(v_1,v_2, \alpha)\in \mathbb P(W)\times \mathbb P(W) \times\mathbb P( \bigwedge^2 W )$ such that $v_1\wedge v_2\neq 0$ and $\alpha^3\neq 0 $. Indeed for any form $\alpha$ of maximal rank the locus of  $(v_1, v_2, \alpha)$ satisfies the condition $v_1\wedge v_2\wedge \alpha \wedge \alpha =0$ when $[v_1\wedge v_2]$ is an element of a general hyperplane section of $G(2,W)$  corresponding to $\alpha^2$. 
 We conclude that $Z_I$ which is parametrized by this open irreducible  subset of $\mathbb P(W)\times \mathbb P(W) \times\mathbb P( \bigwedge^2 W )$ is also irreducible.

 Notice now, that the space of pairs $(A,l)$ satisfying the condition $l\in \mathcal Z_A$ is fibered over the irreducible locus $\mathcal Z_{I}$ of isotropic lines in $\Omega\setminus G(3,W)$ with fibers isomorphic to the Lagrangian Grassmannians $LG(8,16)$. Indeed $l\subset  \PP(A)$ implies $l$ is isotropic. Furthermore for a fixed isotropic $l$ the locus of Lagrangian subspaces $\mathbb{P}(A)$ containing $l$ is isomorphic to a Lagrangian Grassmannian $LG(8,\langle l\rangle^{\perp}/\langle l\rangle)$ with respect to the induced symplectic form, where $\langle l\rangle$ is the  $2$-dimensional isotropic subspace corresponding to l.  We conclude that the variety of pairs $\{(A,l)\mid l\in \mathcal Z_A \}$ is irreducible. 
It follows that we can speak of general pairs $(A,l)$ such that  $l\in \mathcal Z_A$. Note that we do not claim that $\mathcal Z_A$ is irreducible.
\end{rem}

The following properties of  $\mathcal Z_A$ and lines represented by elements of $\mathcal Z_A$ will be used.
\begin{lemma}\label{properties of Z_A} Let $(A,l)$ be a general pair such that $A$ is Lagrangian subspace of $\bigwedge^3 W$ and $l$ is a line in $\mathbb{P}(\bigwedge^3 W)$ such that $l\in \mathcal Z_A$. Then,
\begin{enumerate}
    \item $\phi_1(l)\subset X_A$ and $\phi_1(l)\cap \operatorname{Sing}(X_A) $ is a reduced $0$-dimensional scheme of length~4. 
    \item $\phi_2(l)\subset X_{A^{\vee}}$ and $\phi_2(l)\cap \operatorname{Sing}(X_{A^{\vee}})$ is a reduced $0$-dimensional scheme of length~4. 
    \item The locus $\{[U]\in G(3,W) \mid\ l\subset \mathbb P(T_U)\}$ is a line $L_l$.
    \item We have $L_l\subset Y_A$ and $L_l\cap \operatorname{Sing}(Y_A)$ is a reduced $0$-dimensional scheme of length~4.
  
\end{enumerate}
Furthermore the following converses hold:
\begin{itemize}
    \item For any line $k\subset X_A$ such that $k \cap D^2_{A,\mathcal F}$ consists of four points, there exists $l\in \mathcal Z_A$ such that $k=\phi_1(l)$.
    \item For any line $k\subset Y_A$ such that $k \cap D^2_{A,\mathcal F'}$ consists of four points, there exists $l\in \mathcal Z_A$ such that $k=\phi_2(l)$.
    \item  For any line $K\subset G(3,W)$ such that  $K\subset Y_A$ and $K\cap D^3_{A,\mathcal T}$ consists of four points, there exists $l\in \mathcal Z_A$ such that $K=L_l$.
\end{itemize}
 
\end{lemma}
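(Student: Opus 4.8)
The plan is to unwind the descriptions established in Lemma \ref{description of l} and the definition of the degeneracy loci, treating the four claims and their converses as a set of mutually reinforcing statements. A line $l$ with $[l]\in\mathcal Z_A$ is, by Lemma \ref{description of l}, of the form $l=\{[(\lambda_1 v_1+\lambda_2 v_2)\wedge\alpha]\}$ with $\alpha$ of maximal rank and contained in $\mathbb P(A)$. First I would treat item (3), which is purely linear-algebraic: by the characterisation \eqref{pinTU}, a point $[v\wedge\alpha]$ lies in $\mathbb P(T_U)$ exactly when $v\in U$ and $\alpha\wedge\bigwedge^3 U=0$. Requiring $l\subset\mathbb P(T_U)$ forces both $v_1,v_2\in U$ and $\alpha\wedge\bigwedge^3 U=0$; since $\langle v_1,v_2\rangle$ is already $2$-dimensional, the space of such $U$ is cut out inside $G(3,W)$ by these conditions, and I would verify that generically this is a single line $L_l$ (the third generator $v_3$ of $U$ varies in a pencil determined by the condition $\alpha\wedge v_1\wedge v_2\wedge v_3=0$). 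The maximal rank of $\alpha$ is what guarantees this locus is one-dimensional rather than larger.

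Next I would establish items (1) and (2) together with the length-$4$ claims. The inclusions $\pi_1(l)\subset X_A$ and $\pi_2(l)\subset X_{A^*}$ follow immediately: each point of $l$ lies in $\mathbb P(A)\cap\mathbb P(F_{v})$ (resp.\ $\mathbb P(F'_H)$) for the appropriate $v$ (resp.\ $H$), which is precisely the defining incidence for $X_A$ (resp.\ $X_{A^*}$). For the degeneracy-jump statements, I would identify the second degeneracy locus $D^2_{A,\mathcal F}$ along $\pi_1(l)$ with the points where $\dim(F_v\cap A)\geq 2$, and argue that this cuts $\pi_1(l)$ in a scheme whose length is computed by a Chern-class / Pragacz--Ratajski type formula for the relevant Lagrangian bundle restricted to the line, yielding length $4$. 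The reducedness for general $[l]\in\mathcal Z_A$ is a genericity statement. Item (4) is similar in spirit: $L_l\subset Y_A$ because along $L_l$ the two-form $\alpha$ gives a $2$-dimensional intersection $T_U\cap A$ (the fibre $\mathbb P(A)\cap\mathbb P(T_U)$ contains $l$, a line, hence $\dim\geq 2$), and the fourth degeneracy locus $D^3_{A,\mathcal T}$ meets $L_l$ in the expected length-$4$ scheme by the analogous computation.

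For the three converses, the strategy is to reverse these constructions. Given a line $k\subset X_A$ meeting $D^2_{A,\mathcal F}$ in four points, I would reconstruct $l$ by noting that over each smooth point of $k$ the intersection $\mathbb P(A)\cap\mathbb P(F_v)$ is a single point, and the four points of $D^2_{A,\mathcal F}\cap k$ are exactly where this lift degenerates; assembling these lifts produces a curve in $\mathbb P(A)\cap\Omega$ mapping to $k$ under $\pi_1$, which I would then show is a line of the required form in $\mathcal Z_A$ (one must check it projects to a line under $\pi_2$ as well, using the $\alpha$ of maximal rank). The converse for $Y_A$ with $D^3_{A,\mathcal T}$ proceeds dually. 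The main obstacle I anticipate is precisely these converse directions, and within them the passage from ``$k$ meets the deeper degeneracy locus in four points'' to ``the lift is genuinely a line in $\mathcal Z_A$ lying in $\mathbb P(A)$'' — one must rule out that the natural lift is a higher-degree rational curve and must pin down that the number four is not merely an upper bound but matches the combinatorics forcing $\alpha$ to have maximal rank. The cleanest route is likely a dimension count on $\mathcal Z_A$ combined with the degeneracy-locus length formula, showing the two constructions are mutually inverse on a dense open set, so that the count of $4$ on each side is forced to agree.
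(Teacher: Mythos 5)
Your forward directions are broadly workable, but two points need repair. In (3), your parenthetical claim that ``the maximal rank of $\alpha$ is what guarantees this locus is one-dimensional rather than larger'' is off target. Writing $\alpha=v_1\wedge w_2+v_2\wedge w_1+\alpha_v$ as in Lemma \ref{description of l}, the condition $\alpha\wedge v_1\wedge v_2\wedge u=0$ cuts out a pencil of $U$'s precisely when $\alpha\wedge v_1\wedge v_2$ is decomposable (i.e.\ $\alpha_v$ has rank $2$); if $\alpha_v$ has rank $4$ --- which is perfectly compatible with $\alpha$ having maximal rank and with $[l]\in\mathcal Z$ --- the locus is \emph{empty}, not too large. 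What saves the statement is not the rank of $\alpha$ but the hypothesis $l\subset\mathbb{P}(A)$ with $A$ Lagrangian: the plane $V_l=\langle v_1\wedge\alpha,\,v_2\wedge\alpha\rangle$ must be isotropic, and $\omega(v_1\wedge\alpha,v_2\wedge\alpha)=\pm\, v_1\wedge v_2\wedge\alpha\wedge\alpha$, so isotropy forces $\alpha_v^2=0$; this is exactly why the paper's space $P$ (cut out by the hyperplanes $v_i\wedge\alpha\wedge\alpha$) contains $V$. In (1)--(2), a naive Pragacz--Ratajski count on the line is vacuous: $D^2_{A,\mathcal F}$ has expected codimension $3$, and $A\cap F_v$ is nonzero along \emph{all} of $\pi_1(l)$, so you are in an excess situation. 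The count can be rescued by quotienting by the line subbundle of $A\cap\mathcal F$ furnished by $l$ itself and computing the first degeneracy locus of the quotient Lagrangians (this is precisely how the paper proves (4), and it does give $4$), but the paper proves (1)--(2) by a different, duality-theoretic argument: the quintic partials of the sextic define the map $\pi_1(l)\to\pi_2(l)$ between two \emph{lines}, so its base locus, which is $\pi_1(l)\cap D^2_{A,\mathcal F}$, has degree $5-1=4$. Note also that reducedness is not a formal ``genericity statement'': the paper verifies it on an explicit example by a Macaulay2 computation and then spreads it out.

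The genuine gap is in the converses, which you rightly single out as the hard part but do not resolve. Your fallback --- a dimension count showing the two constructions are mutually inverse on a dense open set --- cannot prove the statement as written, which quantifies over \emph{every} line $k$ with the four-secancy property; this universal form is exactly what is needed later to identify the moduli spaces of elliptic curves with $\mathcal Z_A$ in Theorem \ref{thm:elliptic}. The paper's mechanism for the first converse is intersection-theoretic: by \cite[Appendix, Corollary A.7]{GK}, $G_A=\Omega\cap\mathbb{P}(A)$ is the blow-up of $X_A$ along its singular surface, with exceptional divisor $E$, and the duality map to $X_{A^*}$ is given by quintics vanishing on that surface; for the proper transform $\tilde k$ of $k$ one has $E\cdot\tilde k\geq 4$ while the image of $\tilde k$ in $X_{A^*}$ is a curve, and these constraints force $E\cdot\tilde k=4$, $(5H-E)\cdot\tilde k=1$, hence $\tilde k$ is a line and $[\tilde k]\in\mathcal Z_A$. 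Nothing in your lift-and-take-closure construction controls the degree of the lifted curve, and the maximal-rank form $\alpha$ is only available \emph{after} you know the lift is a line: Lemma \ref{description of l} applies to lines, not to arbitrary rational curves in $\Omega$. Finally, your claim that the converse for $Y_A$ ``proceeds dually'' is incorrect --- there is no duality in play, and the paper's argument is of a different nature: it sets $R_K=\bigcap_{[U]\in K}T_U$, passes to the $8$-dimensional symplectic reduction $R_K^{\perp}/R_K$, shows by analysing how a Lagrangian $\mathbb{P}^3$ can meet the pencil of fibers $\mathbb{P}(\bar{\mathcal T}_{[U]})$ sweeping a $\mathbb{P}^1\times\mathbb{P}^3\subset\mathbb{P}^7$ that $\dim(A\cap R_K)\in\{0,1\}$ is impossible under your hypothesis, and concludes $\dim(A\cap R_K)=2$, so that $l=\mathbb{P}(A\cap R_K)$ is the required element of $\mathcal Z_A$ with $L_l=K$. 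This reduction step, and the blow-up argument above, are the two ideas missing from your proposal.
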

\begin{proof}
Let $G_A:=\Omega\cap \mathbb{P}(A)$. Note that (see \cite[Appendix]{GK}) $\phi_1|_{G_A}\,\colon\, G_A \to X_A$ and $\phi_2|_{G_A}\,\colon\, G_A\to X_{A^{\vee}}$ are resolutions of singularities  such that $$(\phi_2|_{G_A}) \circ (\phi_1|_{G_A})^{-1}\,\colon\, X_A\dashrightarrow X_{A^{\vee}} \text{ and }(\phi_1|_{G_A}) \circ (\phi_2|_{G_A})^{-1}\,\colon\,  X_{A^{\vee}}\dashrightarrow X_A$$ are maps defined by partial derivatives of the sextic polynomials defining the corresponding EPW sextics. Now, clearly, since $l\subset G_A$, we have $\phi_1(l)\subset X_A$. Moreover, by definition of $\mathcal Z_A$, both $\phi_1(l)$ and $\phi_2(l)$ are lines. Moreover, by Remark 4.2 we can assume that both these lines are not contained in the sinugular loci of $X_A$ and $X_{A^{\vee}}$. Indeed, we can first choose $l$ in $\mathcal Z_I$ and then find $A$ for which these assumptions are satisfied. It follows that the linear system of restrictions of quintic partial derivatives of the equation of $X_A$ to $\pi_1(l)$, which defines the  map
$$(\phi_2|_{G_A}) \circ (\phi_1|_{G_A})^{-1}|_{\pi_1(l)}\,\colon\, \phi_1(l)\to \phi_2(l),$$ must have a base locus of degree 4. Consequently, $\phi_1(l)\cap D^2_{A,\mathcal F}$ is a $0$-dimensional scheme of degree 4. 

It is now enough to check on an example with Macaulay 2 (see Appendix) that for a general pair $(A,l)$ with $l\in \mathcal Z_A$ (cf. Remark \ref{ZI}) this scheme consists of four points. This proves (1), the proof of (2) is analogous.

For the converse of (1) we use again \cite[Appendix]{GK}. The proper transform of $k$ on $G_A$ is a curve $\widetilde k$ with the property $E\cdot \widetilde k=4$ and $(5H-E)\cdot \widetilde k=1$, where $H$ is the pullback to $G_A$ of the hyperplane class on $\mathbb{P}(W)$ and $E$ is the exceptional divisor of the resolution $ \phi_1|_{G_A}$. Recall now from \cite[Appendix, Corollary A.7]{GK} that in our notation the pullback to $G_A$ of the hyperplane class on $\mathbb{P}(W^{\vee})$ is $5H-E$ and the restriction of the hyperplane class from $\PP(\bigwedge^3 W)$ to $G_A$ is then $\frac{1}{2} (6H-E)$.   It follows that $\widetilde k$ is a line in $G_A$ that maps to lines via each of the maps $\phi_1$ and $\phi_2$ and  consequently $\widetilde k\in \mathcal Z_A$. Analogously we obtain the proof of the converse of (2).

Let us now pass to the proof of (3). By Lemma \ref{description of l} we have $$l=\{[(\lambda_1 v_1+\lambda_2 v_2)\wedge \alpha]\in \mathbb P(\textstyle{\bigwedge}^3 W) \mid\ \lambda_1, \lambda_2 \in \mathbb C\}, $$
for some linearly independent $v_1,v_2\in W$ and $\alpha\in \bigwedge^2 W$ of maximal rank.  Let $V$ be the subspace of $W$ spanned by $v_1,v_2$. Consider also the $5$-vectors $v_1\wedge \alpha\wedge \alpha\in \bigwedge^5 W$ and $v_2\wedge \alpha\wedge \alpha\in \bigwedge^5 W $ which are linearly independent since $l\in \mathcal Z_A$. As $\dim W=6$ these are totally decomposable $5$-vectors and hence correspond to hyperplanes in $\mathbb{P}(W)$, furthermore these hyperplanes intersect in a $4$-dimensional space that we will denote by  $P\subset W$. Then $V\subset P$ since $l\in \mathcal Z_A$ implies that $l$ is an isotropic line and hence $v_1\wedge v_2 \wedge \alpha \wedge \alpha=0$ which means that both $v_1$ and $v_2$ are in both 5-spaces corresponding to the $5$-vectors $v_1\wedge \alpha\wedge \alpha$ and $v_2\wedge \alpha\wedge \alpha$. It follows that the space $$\{[U]\in G(3,W)\mid\ V\subset U\subset P\}$$ is a line that we claim to be $L_l$. Indeed, by  (\ref{pinTU}), we know that $v_1\wedge \alpha\in T_U$ and $v_2\wedge \alpha\in T_U$ if and only if $v_1,v_2\in U$ and $\alpha\wedge \bigwedge^3U=0$, which is equivalent to $V\subset U\subset P$.

For (4) we observe that $L_l\subset Y_A$ follows directly from the definition since for any $[U]\in L_l$ we have $l\subset \mathbb P(T_U)$ and $l\subset \mathbb P(A)$ which gives $\dim (T_U\cap A)\geq 2$ i.e. $[U]\in Y_A=D^2_{A,\mathcal T}$. To describe $L_l\cap \operatorname{Sing} Y_A= L_l\cap D^3_{A,\mathcal T}$ it is enough to see that this locus is described on $L_l$ as a first Lagrangian degeneracy locus. Indeed, take the $2$-dimensional vector space $V_l\subset \bigwedge^3 W$ corresponding to $l$ and endow $V_l^{\perp}/V_l$ with the induced conformal symplectic structure. Then $\bar{\mathcal T}_l=\{T_U/V_l\}_{[U]\in L_l}$ is a family of Lagrangian spaces parametrized by $L_l$. Furthermore, by generality assumption we have $A/V_l$ is a general Lagrangian space. To see it we observe that Remark \ref{ZI} allows us to first choose a general $l\in\mathcal Z_I$ and then a general Lagrangian space $A$ such that $l\subset \mathbb P(A)$. The latter choice of $A$ is equivalent to the choice of a Lagrangian subspace in $V_l^{\perp}/V_l$.  

Having generality of $A/V_l$ we can use the Kleiman-Bertini theorem to obtain that $D^1_{A/V_l,\bar{\mathcal T}_l  }$ is a smooth set of four points. Indeed $\bar{\mathcal T}_l $ gives an embedding of $L_l$ into the Lagrangian Grassmannian $LG( 8, V_l^{\perp}/V_l) $ on which we have a transitive action of the symplectic group $Sp(V_l^{\perp}/V_l)$. Now $A/V_l$ defines a universal Lagrangian degeneracy locus $\mathbb D^1_{A/V_l}=D^1_{A/V_l, \mathcal U_{LG( 8, V_l^{\perp}/V_l)}}$ associated to the universal bundle $\mathcal U_{LG( 8, V_l^{\perp}/V_l)}$ seen as a subbundle of the trivial bundle with fibers $V_l^{\perp}/V_l$. Then $\mathbb D^1_{A/V_l}$ is a generically smooth hypersurface and different choices of $A/V_l$ lead to its translates. By the Kleiman-Bertini theorem we can choose $A/V_l$ such that the intersection of the image of $L_l$ in $LG( 8, V_l^{\perp}/V_l) $  via the embedding given by $\bar{\mathcal T}_l $ meets transversely $\mathbb D^1_{A/V_l}$. The intersection will then be a smooth finite scheme. 



To compute the number of points we observe that the bundle with fibers $T_U/V_l$ is a quotient of the restriction $\mathcal T|_{L_l} $ by a trivial subbundles and hence its Chern classes are the same as restrictions of Chern classes of $\mathcal T$ to $L_l$. In consequence the degree of the first Lagrangian degeneracy locus is four which concludes the proof that $L_l\cap \operatorname{Sing} (Y_A)$ is a set of four distinct points.

For the converse, let us be more precise. Consider a line $K$ in $G(3,W)$. Let $V_K$ and $P_K$ be the $2$- and $4$-dimensional subspaces of $W$ such that $$K=\{[U]\in G(3,W) \mid\ V_K\subset U\subset P_K\}.$$ 
Consider a corresponding decomposition $W=V_1+V_2+V_3$ into a sum of $2$-dimensional subspaces such that $V_1=V_K$ and  $V_1+V_2=P_K$.
Let 
$$\textstyle R_K:=\bigwedge^2 V_K\wedge W + \bigwedge^3 P_K= \bigwedge^2 V_1\wedge V_2+ \bigwedge^2 V_1\wedge V_3+ V_1\wedge \bigwedge^2 V_2 $$
and observe that $R_K$ is a $6$-dimensional vector subspace of $\bigwedge^3 W$ such that $$R_K=\bigcap_{[U]\in K} T_U.$$ 
Note that $R_K$ is isotropic with respect to the conformal symplectic structure on $\bigwedge^3 W$. Consider $R_K^{\perp}/R_K$ with the induced conformal symplectic structure and the $1$-dimensional family of Lagrangians $\bar{\mathcal T}=\{T_U/R_K\}_{[U]\in K}$. We will denote by $\bar{\mathcal T}_{[U]}:=T_U/R_K$ the fiber of $\bar{\mathcal T}$ over $[U]$. 
Observe that $R_K^{\perp}=\bigwedge^2 V_1\wedge V_2+ \bigwedge^2 V_1\wedge V_3+ V_1\wedge \bigwedge^2 V_2 + V_1\wedge V_2\wedge V_3$, hence $R_K^{\perp}/R_K$ is naturally identified to $ V_1\wedge V_2\wedge V_3$ with conformal symplectic structure defined by wedge product. Under this identification $\bar{\mathcal T}_{[U]}$ for $V_1\subset U\subset V_1+V_2$ is the space $V_1\wedge (U\cap V_2)\wedge V_3$.  
It follows that $\bar {\mathcal T}$ gives a family of disjoint $3$-dimensional projective spaces sweeping a $\mathbb{P}^1\times \mathbb{P}^3\subset \mathbb{P}^7=\mathbb P(R_K^{\perp}/R_K)$.

For a Lagrangian space $A\subset \bigwedge^3 W$
let us consider $\bar{A}=(A\cap R_K^{\perp})/(A\cap R_K)$. Set $c=\dim (A\cap R_K)$ and observe that 
$$K\cap D^3_{A,\mathcal T}=D^{3-c}_{\bar A, \bar{\mathcal T}}.$$
We want to prove that whenever $K\cap D^3_{A,\mathcal T}$ consists of four points then $\mathbb{P}(A\cap R_K)$ is a line $l$ (i.e. $c=2$) that necessarily is represented by an element of $\mathcal Z_A$ (since $R_K\subset \Omega$) and satisfies $L_l=K$. 
For that it is enough to prove that $D^{3-c}_{\bar A, \bar{\mathcal T}}$ cannot consist of four points if $c\neq 2$. 
We have two possibilities to exclude:
\begin{itemize}
    \item If $D^{3}_{\bar A, \bar{\mathcal T}}$ consists of four points then these four points correspond to four fibers $\mathcal T_{[U_i]}$ for $i=1\dots 4$ such that $\dim (\mathbb P(\bar A)\cap \mathbb P(\mathcal T_{[U_i]}))\geq 2$. But for any $i\neq j$ we have $\mathbb P(\mathcal T_{[U_i]}) \cap \mathbb P(\mathcal T_{[U_j]})=\emptyset$. In particular, $\mathbb P(\bar A)$, which is a projective space of dimension 3,  must contain two disjoint planes. This is a contradiction.

\item If $D^{2}_{\bar A, \bar{\mathcal T}}$ consists of four points then these four points correspond to four fibers $\mathcal T_{[U_i]}$ for $i=1\dots 4$ such that $\dim (\mathbb P(\bar A)\cap \mathbb P(\mathcal T_{[U_i]}))\geq 1$. Let us then take a line $k$ contained in the intersection  $\mathbb{P}({\bar{\mathcal T}}_{[U_1]})$. We observe that its orthogonal complement $k^{\perp}$ is a $\mathbb{P}^5$ containing $\mathbb P({\bar{\mathcal T}}_{[U_1]})$ and meeting $\bigcup_{[U]\in K}\mathcal T_{[U]}=\mathbb{P}^1\times \mathbb{P}^3\subset \mathbb{P} ^7$ in the union of $\mathbb P({\bar{\mathcal T}}_{[U_1]})$ and a residual quadric $\mathbb{P}^1\times\mathbb{P}^1$.  Now $\mathbb P(\bar A)$ is a projective 3-space contained in $k^{\perp}$ that contains two (in fact four) disjoint lines lying on the quadric and hence contains the whole quadric. But this means that 
$\mathbb P(\bar A)$ meets $\mathbb P({\bar{\mathcal T}}_{[U]})$ in at least a line for each $[U]\in K$. In consequence $D^{2}_{\bar A, \bar{\mathcal T}}=K$ which is a contradiction with our assumption that it consists of exactly four points.
\end{itemize}
 We conclude that under our assumptions on $K$ we must have $c=2$. 
\end{proof}

\begin{lemma}\label{curve ZA} When $A$ is general, the variety   $\mathcal Z_A\subset G(2,\bigwedge^3 W)$ is of pure dimension 1.
\end{lemma}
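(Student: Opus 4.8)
The plan is to parametrise $\mathcal Z_A$ explicitly via Lemma \ref{description of l} and then recognise it as (a pencil-bundle over) a determinantal locus whose dimension I control by an expected-dimension count. By Lemma \ref{description of l} every $[l]\in\mathcal Z$ is encoded by a pair $(V,[\alpha])$, where $V=\langle v_1,v_2\rangle=\pi_1(l)\in G(2,W)$ and $\alpha\in\bigwedge^2 W$ is of maximal rank, the line being the projectivisation of the $2$-plane $V\wedge\alpha:=\{v\wedge\alpha:v\in V\}\subset\bigwedge^3 W$. In particular all of $\mathcal Z$ is covered by non-degenerate $\alpha$. The condition $[l]\subset\mathbb P(A)$ defining $\mathcal Z_A$ then reads $V\wedge\alpha\subseteq A$. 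Since $v\wedge(v_1\wedge v_2)=0$ for $v\in V$, replacing $\alpha$ by $\alpha+t\,v_1\wedge v_2$ leaves each $v_i\wedge\alpha$ unchanged, so the form $\alpha$ enters only through the pencil $\langle\alpha,v_1\wedge v_2\rangle$.

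Next I would encode the containment as a degeneracy condition. For fixed non-degenerate $\alpha$ the map $W\to\bigwedge^3 W$, $v\mapsto v\wedge\alpha$, is injective with image a $6$-dimensional space $L_\alpha:=\alpha\wedge W$, so the pairs $(V,[\alpha])$ with $V\wedge\alpha\subseteq A$ correspond to $2$-planes $V\subseteq U_\alpha$, where $U_\alpha:=\{v\in W:v\wedge\alpha\in A\}$ and $\dim U_\alpha=\dim(A\cap L_\alpha)$. Hence $\mathcal Z_A$ is governed by the locus
$$\mathcal D:=\{[\alpha]\in\mathbb P(\textstyle\bigwedge^2 W)\colon \dim(A\cap L_\alpha)\geq 2\},$$
which is exactly the corank-$\geq 2$ locus of the bundle map $W\otimes\mathcal O(-1)\to(\bigwedge^3 W/A)\otimes\mathcal O$ on $\mathbb P(\bigwedge^2 W)$ given by wedging with $\alpha$. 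For general $A$ the corank-$3$ locus is empty (its expected codimension $(6-3)(10-3)=21$ exceeds $14$), so over a general point of $\mathcal D$ one has $\dim U_\alpha=2$, whence $U_\alpha=V$ is determined and $[\alpha]\mapsto l$ defines a map $\mathcal D\dashrightarrow\mathcal Z_A$. A direct computation (using that $\alpha$ has maximal rank, so is not of the form $v_1\wedge a+v_2\wedge b$) shows the fibre over $l$ is precisely the pencil $[\alpha_0+t\,v_1\wedge v_2]$, i.e.\ a $\mathbb P^1$; therefore $\dim\mathcal Z_A=\dim\mathcal D-1$, and it remains to prove $\dim\mathcal D=2$.

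The map goes from a rank-$6$ to a rank-$10$ bundle and is generically of full rank $6$, since $\dim(A\cap L_\alpha)=0$ for general $\alpha$ because $10+6<20$. Thus the corank-$\geq 2$ locus has expected codimension $(6-4)(10-4)=12$ inside the $14$-dimensional $\mathbb P(\bigwedge^2 W)$, i.e.\ expected dimension $2$. Being cut out by the vanishing of the $5\times 5$ minors, $\mathcal D$ has dimension $\geq 2$ at every point where it is nonempty, and its Thom--Porteous class is a nonzero multiple of $h^{12}$ (the two bundles have Chern classes which are powers of the hyperplane class $h$), forcing $\mathcal D\neq\emptyset$ and $\dim\mathcal D\geq 2$ for every admissible $A$; consequently $\dim\mathcal Z_A\geq 1$.

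The main obstacle is the reverse inequality $\dim\mathcal D\leq 2$. The difficulty is that our bundle map is not a generic one: since $A$ is Lagrangian, the subspace $V\wedge\alpha\subseteq A$ is forced to be isotropic (equivalently $v_1\wedge v_2\wedge\alpha^{2}=0$), so one cannot simply invoke genericity of bundle maps to conclude that $\mathcal D$ is not larger than expected. I would settle this by semicontinuity: exhibiting a single explicit Lagrangian $A_0$ for which a direct computation (in the spirit of the Macaulay2 verifications of the Appendix invoked in Lemma \ref{properties of Z_A}) shows $\dim\mathcal Z_{A_0}\leq 1$, so the open locus $\{A\colon\dim\mathcal Z_A\leq 1\}$ is nonempty and hence contains the general $A$; combined with the universal lower bound above this yields $\dim\mathcal Z_A=1$. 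As an alternative to an explicit example, one can instead compute the dimension of the universal family $\{([\alpha],A)\colon\dim(A\cap L_\alpha)\geq 2\}$ over $LG(\bigwedge^3 W)$ by Schubert calculus on the isotropic Grassmannian $IG(2,L_\alpha)$ (using that $\omega|_{L_\alpha}$ is non-degenerate for non-degenerate $\alpha$) and check that it has relative dimension $2$ over $LG(\bigwedge^3 W)$.
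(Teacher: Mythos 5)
Your reduction is attractive and partly correct: parametrising lines by pairs $(V,[\alpha])$ modulo the pencil $\langle\alpha,v_1\wedge v_2\rangle$, and identifying $\mathcal Z_A$ (up to that $\PP^1$) with the corank-$\geq 2$ locus $\mathcal D\subset\PP(\bigwedge^2 W)$, is a genuinely different bookkeeping from the paper (which counts lines inside the determinantal cubics $R_U$ and works with an incidence variety over the Lagrangian Grassmannian), and you correctly isolate the crux, namely that the Lagrangian constraint makes the bundle map non-generic. The gap is in how you close the argument. The assertion that $\{A\colon\dim\mathcal Z_A\leq 1\}$ is open is unjustified: the family $A\mapsto\mathcal Z_A$ is not proper over $LG(10,\bigwedge^3 W)$, because $\mathcal Z_A$ excludes lines contracted by the two fibrations of $\Omega\setminus G(3,W)$, and four-secant lines can degenerate to such contracted lines (for instance the lines $\PP(F_v\cap A)$ with $[v]\in D^2_{A,\mathcal F}$, of which there is a surface's worth in $\Omega\cap\PP(A)$); hence fibre dimension is not upper semicontinuous on the base. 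Nor does the fibre-dimension theorem for non-proper maps rescue the argument: a computation at an explicit $A_0$ only bounds those components of the universal family $\{(A,[l])\colon [l]\in\mathcal Z_A\}$ whose image contains $A_0$, and you cannot exclude a dominating component whose dense constructible image misses $A_0$. What makes a one-example verification sufficient in the paper is the \emph{irreducibility} of the universal incidence $D=\{(A,l,[U])\}$: it fibres over the irreducible $\mathcal Z$ with irreducible fibres (open subsets of $LG(8,16)$ times the line $L_l$ of Lemma \ref{properties of Z_A}), hence is irreducible of dimension $57$; only given this does the bound at one point — obtained there from $N_{l|\Omega\cap\PP(A)}\cong 2\oo_l(-1)\oplus\oo_l$, reduced to a Macaulay2 check that four tangent lines lie on no quadric — propagate to general $A$. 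Your parenthetical alternative (symplectic Schubert calculus for the condition $\dim(A\cap L_\alpha)\geq 2$ on the universal family over $LG$) is in fact the right repair, precisely because it produces an irreducible $57$-dimensional total space; but even then the phrase ``check that it has relative dimension $2$'' conflates total and fibre dimension: one still needs dominance of the projection to $LG$, i.e.\ one explicit fibre of dimension $\leq 2$, so the explicit verification cannot be dispensed with and must be wired through an irreducible total family.

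Two further steps do not hold as stated. First, the Thom--Porteous class only shows $\mathcal D\neq\emptyset$, and a point of $\mathcal D$ with degenerate $\alpha$ yields no element of $\mathcal Z_A$: if $v_1\wedge\alpha,\,v_2\wedge\alpha\in A$ then the Lagrangian condition forces $v_1\wedge v_2\wedge\alpha^2=0$, and for $\rank\alpha\leq 4$ this forces $V$ to meet the support of $\alpha$, so that $\PP(V\wedge\alpha)$ meets $G(3,W)$ (for $\rank\alpha=2$ it lies inside it). So $\dim\mathcal Z_A\geq 1$ does not yet follow. This is reparable — assumption \eqref{eq:Lagrangian_condition} then excludes all degenerate points of $\mathcal D$ — but the observation is needed and missing. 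Second, the emptiness of the corank-$3$ locus cannot be obtained from expected codimension: that is exactly the genericity you concede is unavailable, and it is not a side issue, since a corank-$3$ point $[\alpha]$ produces a whole $\PP^2$'s worth of lines $\PP(V\wedge\alpha)$, $V\subset U_\alpha$, inside $\mathcal Z_A$ (equivalently a plane $\PP(U_\alpha)\subset X_A$), so the lemma would be false for any $A$ where it occurs; its emptiness for general $A$ requires its own argument or example.
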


\begin{proof} We first compute the dimension of $\mathcal Z$ by computing the dimension of the locus of lines in $\mathcal Z$ which are contained in a fixed space $\PP( T_{U})$. For each $[U]\in G(3,W)$ we will denote this locus by $Z_U$. The elements of $Z_U$ are lines contained in $(\Omega\setminus G(2,W))\cap \mathbb P( T_U)$ which is the smooth locus of a determinantal cubic cone $R_U$. Now observe that, inside a determinantal cubic (cone) in $\PP^9$ there are three types of lines that do not intersect the singular locus $C_U=C(\PP^2\times \PP^2)$: those contained in the $\PP^6$ span of two $\PP^3$ in one of the two rulings of $C_U$ (these $\PP^3$ meet only in the vertex of the cone)  or those contained in the $\PP^5$ span of two $\PP^3$ from different rulings of $C_U$ (these $\PP^3$ meet in a line passing through the vertex of the cone). 
The latter are exactly the lines in  $\mathcal Z_U$. 
Now, for general $[U]\in G(3,W)$ the locus $Z_{U}$ is $12$-dimensional, since the choice of a general line in $\PP^5$ is $8$-dimensional and the choice of a pair of $\PP^2$ fibers from different rulings in $\mathbb{P}^2\times \mathbb{P}^2$ is $4$-dimensional.

From Lemma \ref{properties of Z_A}(3) we deduce
$$\mathcal Z=\bigcup_{[U]\in  G(3,W)}Z_U \subset \operatorname G(2,\textstyle \bigwedge^3 W).$$
Let us now consider the correspondence 
$$\bar{\mathcal Z}=\{(l,[U])\in \mathcal Z\times G(3,W) \mid\ \ l\subset R_U \}.$$
Then $\bar{\mathcal Z}$ admits two dominant projections: onto $\mathcal Z$ and $G(3,W)$. The fiber over $[U]$ of the projection to $G(3,W)$ is $Z_U$ and is $12$-dimensional as explained above. Whereas each fiber of the first projection is a line as seen in Lemma \ref{properties of Z_A}(3) (observe that the generality assumption on l was not used in Lemma \ref{properties of Z_A}(3)). It follows that $\bar{\mathcal Z}$ is irreducible of dimension 12+9=21 and in consequence $\mathcal Z$ is irreducible and has dimension $21-1=20$.

Next, we consider the correspondence $$\textstyle \mathcal D=\{ (A,l,[U])\in (\mathrm{LG}(10,\bigwedge^3W) \setminus \Sigma)\times \mathcal Z\times G(3,W) \mid \ \ l\subset A,  \ l\subset R_U \},$$
with two natural projections: \begin{itemize}
    \item $p_1\,\colon\, \mathcal D \to \mathrm{LG}(10,\bigwedge^3W)\setminus \Sigma$,  whose fibers are $\mathcal Z_A$, for  $A\in \mathrm{LG}(10,\bigwedge^3W)\setminus \Sigma$;
    \item  $p_2\,\colon\, \mathcal D \to \bar{\mathcal Z}$, whose fibers are all isomorphic to open subsets of $LG(8,16)$.
\end{itemize} 
It follows that $\mathcal D$ is irreducible of dimension $\dim LG(8,16)+\dim \bar{\mathcal Z}=36+20=56$. Now since $\dim LG(10,\bigwedge^3 W)=55$ we get $\dim \mathcal Z_A\geq 1$

It remains to show that for a general $A$ the family $\mathcal Z_A$ is at most $1$-dimensional.  
For this, we consider the dimension of the tangent space of $\mathcal Z_A$ in points $l\in \mathcal Z_A$ such that the pair $(A,l)$ is general with this property. In other words, for 
  $l\in \mathcal Z_A$ general, we compute the dimension of the space of sections of the normal bundle $N_{l/\Omega\cap \PP(A)}$.

For that, observe that when $A$ is a general Lagrangian space, then $X_A$ is an EPW sextic singular in a smooth surface $S_A$ of degree 40. In that case $\Omega\cap \mathbb P(A)$ is the blow up of $X_A$ along $S_A$. Let us denote by $\pi: \widetilde{\mathbb{P}(W)} \to \mathbb{P}(W)$ the blow up of $\mathbb{P}(W)$ in $S_A$. To simplify notation, in the following, we will write $\mathbb P^5$ for $\mathbb P(W)$ and $\widetilde{\mathbb P^5}$ for $\widetilde{\mathbb{P}(W)}$. Then $\Omega\cap \mathbb P(A)$ is a smooth divisor $D$ in $\widetilde{\mathbb P^5}$ which belongs to the system $|6H-2E|$, where $H$ is the pullback of the hyperplane class in $\mathbb{P}^5$ via $\pi$, whereas $E$ is the exceptional locus of $\pi$. Observe that $\pi(l)$ is a line in $\mathbb{P}^5$ meeting $S_A$ in four points. It follows that $H\cdot l=1$ and $E\cdot l=4$. In consequence, $\mathcal O_l(6H-2E)\simeq \mathcal O_{\mathbb P^1}(-2)$. Let us consider the following exact sequence for the blow up $\pi$:

$$0\to T_{\widetilde{\mathbb P^5}} \xrightarrow{\kappa} \pi^* T_{\mathbb P^5}\to T_{\pi|_E}(E)\to 0,$$
where $\kappa$ is the tangent map to $\pi$ and $T_{\pi|_E}$ is the pushforward via the natural inclusion of the  relative tangent bundle of the restricted map  $\pi|_E$.  
Restricting the sequence to $l$ and taking into account that $T_l\subset T_{\widetilde{\mathbb P^5}}$ maps via $\kappa$ isomorphically to $T_l\subset \pi^* T_{{\mathbb P^5}}$,  gives
\begin{equation}\label{firstnorm}
    0\to N_{l/\widetilde{\mathbb{P}^5}}\xrightarrow{\bar{\kappa}} \pi|_l^*(N_{\pi(l)/\mathbb P^5}) \to T_{\pi|_{l\cap E}}(E)\to 0.
\end{equation}

We furthermore have 
\begin{equation}\label{norm}
0\to N_{l/D}\to N_{l/\widetilde{\mathbb{P}^5}}\to N_{D/\widetilde{\mathbb{P}^5}}|_l\to 0.
\end{equation}

Observe that $N_{D/\widetilde{\mathbb{P}^5}}|_l \simeq \mathcal O_{\mathbb P^1}(-2)$
and $\pi|_l^*(N_{\pi(l)/\mathbb P^5})= 4\mathcal O_{\mathbb P^1}(1)$. Furthermore $T_{\pi|_{l\cap E}}(E)=\bigoplus_{i=1}^4 \mathbb C_{p_i}^{\oplus 2}$, where $p_1,\dots, p_4$ are the four points of intersection of $l$ with $E$ and $\mathbb C_{p_i}$ is the skyscraper sheaf supported on $p_i$. 
Now, from (\ref{norm}) we have $c_1(N_{l/D})=-2$ and in consequence $h^0(N_{l/D})\geq 1$. Furthermore, the long exact sequence of cohomology associated to (\ref{norm}):
$$0\to H^0(N_{l/D})\to H^0(N_{l/\widetilde{\mathbb P^5}}) \to H^0(N_{D/\widetilde{\mathbb P^5}}|_l)\to H^1(N_{l/D})\to 
H^1(N_{l/\widetilde{\mathbb{P}^5}})\to H^1(N_{D/\widetilde{\mathbb P^5}}|_l)\to 0,$$
gives
 $h^0(N_{l/D})=h^0(N_{l/\widetilde{\mathbb{P}^5}})$ and $h^1(N_{l/D})=h^1(N_{l/\widetilde{\mathbb{P}^5}})-1$.
Additionally, the long exact sequence of cohomology associated to (\ref{firstnorm}):
$$0\to H^0(N_{l/\widetilde{\mathbb{P}^5}})\to H^0(N_{\pi(l)/\mathbb P^5}) \to H^0(T_{\pi|_{l\cap E}})\to H^1(N_{l/\widetilde{\mathbb{P}^5}})\to 0,$$
gives $h^0 (N_{l/\widetilde{\mathbb{P}^5}})- h^1 (N_{l/\widetilde{\mathbb{P}^5}})=0$.

We claim that $N_{l/D}=\mathcal O_{l}(-1)\oplus \mathcal O_l(-1)\oplus \mathcal O_l$. By the above it is enough to prove that $h^0(N_{l/\widetilde{\mathbb{P}^5}})=1$.
To prove that, first observe that elements in $H^0 (N_{l/\widetilde{\mathbb{P}^5}})$ correspond to elements of $H^0(N_{\pi(l)/\mathbb P^5})$ which are in the image of $\bar{\kappa}$. On the other hand, by the tangent bundles sequence associated to $\pi(l)\subset \mathbb{P}^5$, elements of $H^0(N_{\pi(l)/\mathbb P^5})$ represent classes of sections of  $T_{\mathbb P^5}|_{\pi(l)}$ in the space of sections of $T_{\pi(l)}$. It follows that sections of $N_{\pi(l)/\mathbb P^5}$ correspond to maps $$(x_1:x_2)\mapsto F(x_1:x_2)=F_1(x_1,x_2)\frac{\partial}{\partial x_3}+\dots+ F_4(x_1,x_2)\frac{\partial}{\partial x_6},$$ where $(x_1,x_2)$ are coordinates of $\pi(l)$ and $x_3,\dots,x_6$ are coordinates of the projective space $\mathbb{P}^3=\mathbb P(W/\langle\pi(l)\rangle)$. Such a section is in the image of $\bar\kappa$ if and only if for $i\in\{1,\dots ,4\}$ we have $(F_1(p_i):\ldots :F_4(p_i))\in \bar T_{S_A,p_i}$ where $\bar T_{S_A,p_i}$ is the image of the tangent space to $S_A$ in $p_i$ via the projection from $\pi(l)$. Observe now that the image $F(\pi(l))$ is then a line that intersects all four lines $\bar T_{S_A,p_i}$. The space of sections can be $2$-dimensional only if the four lines $\bar T_{S_A,p_i}$ lie in a quadric surface. This is not the case in a chosen example as we check by the Macaulay 2 script in the Appendix. We conclude by Remark \ref{ZI} that $h^0(N_{l/D})=1$, $h^1(N_{l/D})=0$ and $N_{l/D}=2\mathcal O_l(-1)\oplus \mathcal O_l$ for a general pair $(A,l)$ with $l\in \mathcal Z_A$. 
Finally $\mathcal Z_A$ is $1$-dimensional
and smooth at $l$ for a general pair $(l,A)$ such that $l\in \mathcal Z_A$. In particular, for $A$ general $\mathcal Z_A$ is smooth in a general point of every of its components.  

\end{proof}

The following proves Conjecture 1.1 in \cite{NOb} in the case of fourfolds of $K3^{[2]}$ type and BBF degree $2$,  and sixfolds of $K3^{[3]}$ type and BBF degree $4$ and divisibility $2$. Recall that $\mathcal \beta_X$, $\mathcal \beta_{X^{\vee}}$ and $\mathcal \beta_Y $ denote classes of curves of minimal degree on $\widetilde X_A$, $\widetilde X_{A^{\vee}}$ and $\widetilde Y_A$ respectively for $A$ very general.
\begin{thm}\label{thm:elliptic}
 For a very general $A$, the moduli spaces $M_{1,0}(\widetilde X_A,\beta_X)$, $M_{1,0}(\widetilde X_{A^{\vee}},\beta_{X^{\vee}})$, $M_{1,0}(\widetilde Y_A,\beta_Y)$ are  $1$-dimensional and their reduced structures are isomorphic to each other.
\end{thm}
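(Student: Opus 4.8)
The plan is to reduce all three moduli spaces to the single curve $\mathcal Z_A$ and then to read off the three assertions---one-dimensionality, reducedness, and mutual isomorphism---from the properties of $\mathcal Z_A$ established in Lemmas \ref{properties of Z_A} and \ref{curve ZA}. The guiding principle is that, for general $A$, the varieties $X_A$, $X_{A^*}$ and $Y_A$ are singular exactly along $D^2_{A,\mathcal F}$, $D^2_{A,\mathcal F'}$ and $D^3_{A,\mathcal T}$, and a genus-one stable map of minimal class is a double cover of a line branched at the four points where that line meets the relevant singular locus. Such a map is then determined by its image line together with these four branch points, the latter fixing the elliptic source up to isomorphism.

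First I would establish the description of the minimal stable maps. Since a line is rational, a genus-one stable map with image a line must factor through a double cover of that line, and for the source to be a smooth elliptic curve representing the minimal class the branch divisor must consist of the four intersection points with the singular locus; this realizes each such map as the restriction to $p_X^{-1}(k)$ of the covering $\widetilde X_A\to X_A$ over a four-secant line $k\subset X_A$. The converse statements in Lemma \ref{properties of Z_A} then identify these four-secant lines with $\mathcal Z_A$ via $l\mapsto\pi_1(l)$, and analogously via $l\mapsto\pi_2(l)$ for $X_{A^*}$ and $l\mapsto L_l$ for $Y_A$. These correspondences are algebraic, being induced by the two fibrations of $\Omega\setminus G(3,W)$ and by the incidence description of $L_l$ in Lemma \ref{properties of Z_A}(3); moreover $\pi_1|_{G_A}$ and $\pi_2|_{G_A}$ are small resolutions, so a line $[l]\in\mathcal Z_A$ maps isomorphically onto its image and the identifications upgrade to isomorphisms of Hilbert schemes over $\mathcal Z_A$.

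Granting these identifications, one-dimensionality of each moduli space is immediate from Lemma \ref{curve ZA}. For reducedness I would invoke its normal bundle computation: for general $[l]\in\mathcal Z_A$ one has $N_{l|\Omega\cap\PP(A)}=2\mathcal O_l(-1)\oplus\mathcal O_l$, so that $h^0=1$, $h^1=0$, and $\mathcal Z_A$ is smooth of dimension one at $[l]$. I would transfer this to the moduli of stable maps by comparing deformation-obstruction theories: the deformations of the elliptic double cover that preserve $\beta_X$ are governed by those of the image line inside the EPW variety, the four branch points moving with the line and contributing no additional moduli, so the tangent and obstruction spaces of $M_{1,0}$ agree with $H^0$ and $H^1$ of the relevant normal bundle. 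Reducedness of all three spaces then follows, and since each has been identified with the smooth curve $\mathcal Z_A$, they are isomorphic to one another.

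The step I expect to be the main obstacle is this comparison of deformation theories, together with the rigidity of the branch data that underlies it. One must show that a minimal genus-one stable map admits no infinitesimal deformations beyond those moving its image line inside $\mathcal Z_A$---in particular that varying the branch points or the covering structure yields no extra tangent directions and that the obstruction space vanishes---so that the smoothness of $\mathcal Z_A$ at $[l]$, established via $h^1(N_{l|\Omega\cap\PP(A)})=0$, genuinely forces $M_{1,0}$ to be reduced of dimension one rather than merely a curve set-theoretically.
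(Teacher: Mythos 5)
Your proposal follows essentially the same route as the paper: the paper's proof consists precisely of identifying all three moduli spaces with $\mathcal Z_A$ via the converse statements of Lemma \ref{properties of Z_A} (with the elliptic curves arising as preimages of four-secant lines under the double covers, as stated at the start of the section) and then invoking Lemma \ref{curve ZA}, whose normal bundle computation $N_{l|\Omega\cap\PP(A)}=2\mathcal O_l(-1)\oplus\mathcal O_l$ gives reducedness and one-dimensionality. The deformation-theoretic comparison between the stable-map spaces and the Hilbert scheme of lines, which you flag as the main obstacle, is in fact left implicit in the paper, so your write-up is if anything more careful on that point.
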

\begin{proof} 
Observe that for $l\in \mathcal Z_A$ the preimages of lines $\pi_1(l)\subset X_A$, $\pi_2(l)\subset X_{A^{\vee}}$ and $L_l\subset Y_A$, considered in Lemma \ref{properties of Z_A}, via the corresponding double covers $\widetilde X_A\to X_A$, $\widetilde X_{A^{\vee}}\to X_{A^{\vee}}$ and $\widetilde Y_A\to Y_A$ are elliptic curves which intersect the polarisation of their corresponding hyper-K\"ahler manifolds in degree 2. 
Using the BBF form, $H_2(\widetilde{X}_A,\ZZ)$ coincides with the dual lattice of $H^2(\widetilde{X}_A,\ZZ)$, and the same holds for $\widetilde{X}_{A^{\vee}}$ and $\widetilde{Y}_A$. Therefore, we can compute the intersection of a minimal integral curve in a very general double EPW cube (and sextic) with the polarization $H$ by taking a generator of the algebraic part of the dual lattice, which is given by $H$ itself for EPW sextics, and by $H/2$ for EPW cubes (as the polarization has divisibility 2). In both cases, we obtain $2$ as the intersection number. 
We conclude that homology classes of constructed elliptic curves are curve classes of minimal degree. 

Note  that in each case, a smooth elliptic curve having class of minimal degree must map via the double cover to a line 4-secant to the branch locus. Such lines, in each case, by Lemma \ref{properties of Z_A}, correspond one-to-one to elements of $\mathcal Z_A$. In consequence, the reduced structures of all considered moduli spaces are isomorphic to $\mathcal Z_A$.  Lemma \ref{curve ZA} implies that they are of pure dimension 1.

\end{proof}

\begin{rem}\label{rem:indet}
Observe that using $K_A$ and its projections one can construct an embedding of the singular locus of the EPW sextic $X_A$ 
(or equivalently $X_A^{\vee}$) into  the EPW cube $Y_A$. Indeed, a point in $p\in \operatorname{Sing}(X_A)$ gives rise to a line $l_p:=\phi^{-1}_1(p)\cap \mathbb{P}(A)\subset \Omega$ 
that we can show to be contained in the cubic $R_U=\PP(T_U)\cap \Omega$ for some unique $[U]\in G(3,6)$ and the map $p\mapsto [U]$ will be our embedding.
\end{rem}
\end{section}
\section{Hodge structures} \label{sec:hodge}
In this section, we prove that the correspondence $k_A$ that we have constructed in Proposition \ref{prop:corr} can be used to give an isometry of Hodge structures between the primitive parts of cohomologies $H^2_{{\operatorname{prim}}}(\widetilde{X}_A,\mathbb{Z})$ and $H^2_{{\operatorname{prim}}}(\widetilde{Y}_A,\mathbb{Z})$ as stated in the introduction.

Note that  the invariant part $H^4(\widetilde{X}_A,\mathbb Q)^{\iota_X}$  embeds into $H^4(X_A ,\mathbb Q),$ as $X_A$ is an orbifold obtained as the quotient of $\widetilde{X}_A$  by the covering involution $\iota_X$. This implies that on the smooth locus of $X_A$ there is a $4$-form that is the pushforward of the $4$-form $\sigma_{X_A}\wedge \sigma_{X_A}$ on $\widetilde{X}_A$, where $\sigma_{X_A}$ is a symplectic form on $\widetilde{X}_A$, which is anti-invariant by $\iota_X$. Recall that, from Proposition \ref{prop:corr}, we know that $k_A$ is a non-zero correspondence which gives us a nontrivial $4$-form on $\widetilde{Y}_A$ by taking the image of $\sigma_{X_A}\wedge \sigma_{X_A}$ via $k_A$ as proven in Lemma \ref{nontrivial}.
Furthermore, by \cite{V} and \cite[Theorem 6.2.4]{DeMi} since $\widetilde{X}_A$ is a deformation of a Hilbert square of a K3 surface we have 
$$H^4(\widetilde{X}_A,\mathbb Q)=\operatorname{Sym}^2 (H^2 (\widetilde{X}_A, \mathbb Q)).$$ Moreover, $$H^2_{{\operatorname{prim}}} (\widetilde{X}_A, \mathbb Q)\subset H^2 (\widetilde{X}_A, \mathbb Q) $$ is the anti-invariant part of the involution $\iota_X$. Indeed the invariant part of the involution $\iota_X$ is isomoprhic to $H^2(X_A,\mathbb Q)$ by the pullback map. However,  by the Lefschetz hyperplane theorem, $H^2(X_A,\mathbb Q)$ is generated by the class $H$, the natural polarization arising as the pullback of $\mathcal{O}(1)$ on $X_A$. The pullback of the latter to $H^2 (\widetilde{X}_A, \mathbb Q)$ is the class $L$ of the polarisation on $\widetilde{X}_A$ and hence its orthogonal is the primitive part of the cohomology by definition.  It follows that  $$H^4_{{\operatorname{prim}}}(\widetilde{X}_A)^{\iota_X}=\Sym^2 (H^2 _{{\operatorname{prim}}}(\widetilde{X}_A, \mathbb Q))\oplus \mathbb{Q}H^2,$$ 
In a similar way, we fix a symplectic form $\sigma_{Y_A}$ and we have an inclusion $\Sym^2 (H^2_{{\operatorname{prim}}}(\widetilde{Y}_A, \mathbb Q))\subset H^4_{{\operatorname{prim}}}(\widetilde{Y}_A)^{\iota_Y}$ where $\iota_Y$ is the covering involution of $\widetilde{Y}_A\to Y_A$.
Notice that the Hodge structures $\Sym^2 (H^2 _{{\operatorname{prim}}}(\widetilde{X}_A, \mathbb Q))$ and $\Sym^2 (H^2_{{\operatorname{prim}}} (\widetilde{Y}_A, \mathbb Q))$ are not irreducible. Indeed, by \cite[Proposition 3.2]{O1}  they contain Hodge substructures $W(X_A)$ and $W(Y_A)$ respectively, which are the orthogonal complements of $\mathbb{Q}\sigma_{X_A}\overline{\sigma_{X_A}}$ and  $\mathbb{Q}\sigma_{Y_A}\overline{\sigma_{Y_A}}$. Notice that for $A$ very general, $W(X_A)$ and $W(Y_A)$ are the smallest irreducible Hodge structures containing $\sigma_{X_A}\wedge \sigma_{X_A}$ or $\sigma_{Y_A}\wedge \sigma_{Y_A}$ (see \cite[Proposition 3.2(4)]{O1}).

\begin{cor}\label{cor:WtoW} For a very general Lagrangian space $A\subset \bigwedge^3 W$, the correspondence $k_A: H^4(\widetilde{X}_A,\mathbb Q)\to H^4(\widetilde{Y}_A,\mathbb Q)$ definied in Proposition \ref{prop:corr} induces an isomorphism of rational Hodge structures between $W(X_A)$ and $W(Y_A)$.
\end{cor}

\begin{proof} By Proposition \ref{prop:corr} $k_A$ is a map of rational Hodge structures. Hence, it is also a map of rational Hodge structures after restriction to the Hodge substructure $W(X_A)$ as described above. 
The restriction to $W(X_A)$ is non-trivial by Lemma \ref{nontrivial} and hence it is an embedding because $W(X_A)$ is an irreducible Hodge structure. Moreover, $W(X_A)$ contains $\sigma_{X_A}\wedge \sigma_{X_A}$ and   $W(Y_A)$ contains $\sigma_{Y_A}\wedge \sigma_{Y_A}$, which is a nontrivial multiple of $k_A(\sigma_{X_A}\wedge \sigma_{X_A})$ since $k_A|_{W(X_A)}$ is an injective map of rational Hodge structures. Putting these together, we obtain that $k_A$ is an isomorphism of rational Hodge structures when restricted to $W(X_A)$ and $W(Y_A)$.
\end{proof}

\begin{rem}
We expect that the correspondence is well defined not only on $W(X_A)$ but on all of $$\Sym^2 H^2 _{{\operatorname{prim}}}(\widetilde{X}_A),$$ and therefore that $k_A(\sigma_X\overline{\sigma_X})$ is proportional to $\sigma_Y\overline{\sigma_Y}$.

\end{rem}
Let us consider, as in \cite[Section 2.2]{O4}, a universal family of double EPW sextics above an open affine subset in the locus of Lagrangian spaces $\mathcal{U}'\subset LG(10,\bigwedge^3 W)$
that intersects the locus $\Sigma$.
By \cite[Proposition 3.20]{O3} we can perform simultaneously a resolution of singularity of this family on a sub-locus $\mathcal{U}\subset \mathcal{U}'$ containing
$\mathcal{U}' \cap (LG(10,\bigwedge^3W)\setminus (\Delta\cup \Sigma\cup \Gamma) )$ and a small open neighborhood of a point in $\Sigma\setminus \Sigma'$.
Here $\Sigma'$ is the sublocus of $\Sigma$ where the singularities of $\widetilde{X}_A$ or $\widetilde{Y}_A$ are worse than $A_1$. 

Note that $\mathcal U$ is affine and simply connected, we can hence choose a family of markings for $\widetilde{X}_A$, $\widetilde{Y}_A$.
Furthermore, since $\mathcal U$ is simply connected there exists a section of the local system of second cohomologies of the families $\{\widetilde{X}_A\}_{A\in \mathcal U}$ and $\{\widetilde{Y}_A\}_{A\in \mathcal U}$ associating to $A$ an element $\sigma_{X_A}\in H^2_{{\operatorname{prim}}} (\widetilde{X}_A, \mathbb C)$ representing a symplectic form on $X_A$ with the property that $\int_{X_A}\sigma_{X_A} \wedge {\sigma}_{X_A}=1$. Similarly, we define $\sigma_{Y_A}$ for $A\in \mathcal U$.
We hence have for any $A\in \mathcal{U}$ isometries 
$$\mu_{X,A}\,\colon\, H^2_{{\operatorname{prim}}} (\widetilde{X}_A, \mathbb Z)\simeq L_X$$ and $$\mu_{Y,A}\,\colon\, H^2_{{\operatorname{prim}}} (\widetilde{Y}_A, \mathbb Z)\simeq L_Y$$
that respect fixed quadratic forms on the lattices $L_X\simeq L_Y \simeq (-2)^2\oplus U^2\oplus E_8(-1)^2$ and  
$H^2_{\operatorname{prim}}(\widetilde{X}_A, \mathbb Z)$ and 
$H^2_{\operatorname{prim}}(\widetilde{Y}_A, \mathbb Z)$. 

Let us denote $\sigma^L_{X_A}:=\mu_{X,A}(\sigma_{X_A})$ the image of the class $\sigma_{X_A}$.
Similarly, $\sigma^L_{Y_A}:=\mu_{Y,A}(\sigma_{Y_A})$ denotes the image of the class $\sigma_{Y_A}$. 
Let $W_X$ and $W_Y$ be the hyperplanes in $\Sym ^2 L_X\otimes\mathbb{Q}$ and $\Sym ^2 L_Y\otimes\mathbb{Q}$  defined by the vanishing of the quadratic forms on $L_X$ and $L_Y$. Observe that $\sym^2 \sigma^L_{X_A}\in W_X$ and $\sym^2 \sigma^L_{Y_A}\in W_Y$ as for every $A$ the space $W_X$ are defined as the orthogonal complement of the class $\sigma^L_{X_A}\overline{\sigma}^L_{X_A} $ (see \cite[Section 3]{O1}) and similarly for $Y_A$ thus are isomorphic to $W(X_A)$ and $W(Y_A)$ respectively.
 
 Let us now fix $A_0$ a very general Lagrangian subspace of $\bigwedge^3 W$, then for any $A\in \mathcal U_0$ where $\mathcal U_0$ is as in Proposition \ref{prop:corr}, $k_A$ is well defined and  we have $$k_{L,A} :=(\sym^2 \mu_{Y,A})\circ k_A \circ \sym^2(\mu_{X,A}^{-1})\,\colon\, \Sym ^2 L_X\otimes\mathbb{Q} \to \Sym ^2 L_Y\otimes\mathbb{Q}.$$ The latter is a continuous family of linear maps with rational coefficients hence is constant with respect to changing $A\in \mathcal U_0$.
It follows that we have a map $\bar{k}\,\colon\, \Sym ^2 L_X\otimes\mathbb{Q} \to \Sym ^2 L_Y\otimes\mathbb{Q}$ such that $\bar k=k_{L,A}$ for $A\in \mathcal U_0$. Furthermore, as observed above  $\bar k$ maps $W_X$ to $W_Y$ i.e., it induces a linear isomorphism
$$k\,\colon\, W_X \to W_Y$$  between $\mathbb Q$ vector spaces.

\begin{prop}\label{prop:phi}
There is an isometry $\phi\,\colon\, 
L_X\otimes \mathbb{C}\to L_Y\otimes \mathbb{C}$ such that $\Sym^2\phi|_{W_X}=k$. Moreover, for all $A\in \mathcal U$, $\phi$ maps $\sigma^L_{X_A}$ to $\sigma^L_{Y_A}$ up to a constant. 
\end{prop}
The proposition will be the consequence of the following lemmas.
\begin{lem}\label{k maps sigma2 to sigma2}
The map $k$ is such that for any $A\in \mathcal U$ we have  $k(\sym^2 \sigma^L_{X_A})= c \sym^2  \sigma^L_{Y_A}$ for some constant $c\in \mathbb Q\setminus \{0\}$. 
\end{lem}
\begin{proof} By the above discussion for $A\in \mathcal U_0$ we have $k=k_{L,A}=(\sym^2 \mu_{Y,A})\circ k_A \circ \sym^2(\mu_{X,A}^{-1})$. Now,   
since by Proposition \ref{prop:corr} the maps $k_A$ for $A\in \mathcal U_0$ are maps of Hodge structures, the statement holds in the open set $\mathcal U_0$.
We conclude from the identity principle as we know that the maps $$ LG(10,\bigwedge^3V_6)\to \PP (\Sym^2 L_{X_A}\otimes \mathbb Q),\ A \mapsto [\Sym^2 \sigma_{X_A}]$$
and $$ LG(10,\bigwedge^3V_6)\to \PP (\Sym^2 L_{Y_A}\otimes \mathbb Q),\ A \mapsto [\Sym^2 \sigma_{Y_A}]$$ are meromorphic and holomorphic on the simply connected open set $\mathcal U$.
\end{proof}


\begin{lem}\label{lem:H4toH2} Let $V_1$, $V_2$ be $\mathbb{C}$ vector spaces of the same dimension. Let $H_i\subset \PP \Sym^2 V_i$ for $i\in\{1,2\}$. Let $\xi\,\colon\, H_1\to H_2$ be a linear isomorphism and $Q_i=\{v\in V_i \mid [\sym^2 v]\in H_i\}\subset V_i$. Assume that for every $x\in Q_i$ we have $\xi(\sym^2 x)= \sym^2 y$ for some $y\in Q_2$. Then there exists a linear map $\phi\,\colon\,\PP V_1\to \PP V_2$ such that $\phi(Q_1)=\phi(Q_2)$ and $(\iota \circ \xi)(t)= (\Sym^2 \phi)(t)$, where $\iota$ is the embedding 
$H_2\subset \mathbb \Sym^2 V_2$.
\end{lem}
\begin{proof} Since the Veronese embeddings $v_i\colon \PP(V_i)\to \PP \Sym^2 V_i$ given by $v_i(x)=\sym^2 x$  are isomorphisms for i=1,2, the map $\xi$ induces an isomorphism $Q_1\simeq Q_2$ which extends to a linear map $\phi\colon \mathbb P (V_1)\to \mathbb P(V_2)$ as in the assertion. 
\end{proof}

\begin{proof}[Proof of Proposition \ref{prop:phi}]
We apply Lemma \ref{lem:H4toH2} to $H_1=\PP(W_X\otimes \mathbb C)$, $H_2=\PP(W_Y\otimes \mathbb C)$,  $\xi=\PP(k)$ the map between $\PP(W_X\otimes \mathbb C) \to \PP(W_X\otimes \mathbb C)$ induced by $k$ and $Q_1$, $Q_2$ given by the quadratic forms on $L_X$ and $L_Y$. This data satisfies the assumption of Lemma \ref{lem:H4toH2} by Lemma \ref{k maps sigma2 to sigma2}. We deduce the existence of a linear map $\overline{\phi}: \PP(L_X\otimes \mathbb C)\to \PP(L_Y\otimes \mathbb C)$ such that $\sym^2 \bar{\phi}=\PP(k)$ and in particular $\overline{\phi}([\sigma^L_{X_A}])= [\sigma^L_{Y_A}]$. Furthermore, since $\bar{\phi}(Q_1)= Q_2$ we can choose $\phi\colon  L_X\otimes \mathbb C\to L_Y\otimes \mathbb C$ such that $\phi$ is an isometry and $\overline{\phi}$ is the map of projective spaces induced by $\phi$.
\end{proof}

Let us hence fix a linear isometry
$$\phi\colon L_{X}\otimes \mathbb C\mapsto L_{Y}\otimes \mathbb C,$$
as in Proposition \ref{prop:phi}. Then 
for any $A\in \mathcal U$ the map 
\begin{equation}\label{phiA}
\phi_A:=\mu_{Y,A}^{-1}\circ \phi\circ \mu_{X,A}: H^2_{prim}(X_A,\mathbb C)\to H^2_{prim}(Y_A,\mathbb C)    
\end{equation}
 is an isomorphism of polarised complex Hodge structures.

It remains to prove that $\phi$ maps $L_X$ to $L_Y$ which would imply directly that $\mu_{Y,A}^{-1}\circ \phi\circ \mu_{X,A}$ is an isometry of polarised integral Hodge structures. 
Let us consider a very general Lagrangian space  $A\in \Sigma\cap \mathcal U$. In that case $\PP(A)\cap G(3,6)$ is a point.
As observed in \cite[Section 3]{O3} and \cite[Section 3.1]{IKKR1}, the corresponding hyper-K\"ahler manifolds $\widetilde{X}_A$ and $\widetilde{Y}_A$ both admit a 
birational contraction. In fact, $\operatorname{Pic}(\widetilde{X}_A)=(2)\oplus  (-2)$ and $\operatorname{Pic}(\widetilde{Y}_A)=(4)\oplus (-2)$. Denote by $\widetilde{Z}_A$ the base of the 
contraction on $\widetilde{Y}_A$, which is a double EPW quartic associated to $A$.

\begin{lem}\label{lem:moduli_isometry} The fourfold $\widetilde{X}_A$ is isomorphic to the moduli space of twisted sheaves $M_v(S_2,\beta)$, where $S_2$ is a K3 surface of degree $2$ and $\beta \in H^2(S_2,\oo_{S_2}^{\ast})$ is a $2$-torsion Brauer class. Moreover, we can choose $S_2$ and $\beta$ such that $\widetilde{Y}_A$ is isomorphic to $M_w(S_2,\beta)$ with the same K3 surface and Brauer class. In particular, we have an isometry $\varphi$ of integral Hodge structures $T(\widetilde{X}_A)$ and $T(\widetilde{Y}_A)$ which naturally extends to an isometry between $H^2_{{\operatorname{prim}}}(\widetilde{X}_A,\mathbb{Z})$ and $H^2_{{\operatorname{prim}}}(\widetilde{Y}_A,\mathbb{Z})$ .
 \end{lem}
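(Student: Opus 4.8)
The plan is to exhibit both $\widetilde{X}_A$ and $\widetilde{Y}_A$ as moduli spaces of stable objects on one and the same twisted K3 surface $(S_2,B)$, to read off the isometry $\varphi$ of transcendental lattices from the standard Hodge theory of such moduli spaces, and finally to extend $\varphi$ to $H^2_{prim}$ by a short lattice-theoretic argument. The hypothesis that $\PP(A)\cap G(3,6)$ is a single reduced point is exactly the codimension-one condition which, by \cite{O3,IKKR1}, forces the birational contractions on $\widetilde{X}_A$ and $\widetilde{Y}_A$ and pins down the Picard lattices as $\operatorname{Pic}(\widetilde{X}_A)=(2)\oplus(-2)$ and $\operatorname{Pic}(\widetilde{Y}_A)=(4)\oplus(-2)$; in particular the primitive Picard lattice is $\langle-2\rangle$ in both cases, so that $T(\widetilde{X}_A)$ and $T(\widetilde{Y}_A)$ are the orthogonal complements of a single $(-2)$-class inside $L_X\cong L_Y\cong\Lambda$.

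For the geometric identification I would first invoke that, under the present hypothesis on $A$, the associated Gushel--Mukai fourfold $Z_A$ is Hodge-special and its Kuznetsov component becomes geometric, i.e.\ there is an equivalence $\operatorname{Ku}(Z_A)\simeq D^b(S_2,B)$ with $S_2$ a degree-$2$ K3 surface and $B\in\operatorname{Br}(S_2)$ a Brauer class (see \cite{KM,PPZ}). I would then use the realizations of the two hyper-K\"ahler manifolds as moduli of stable objects in $\operatorname{Ku}(Z_A)$: by \cite{PPZ,BLMNPS} the double EPW cube appears as a moduli space $M_{\sigma}(\operatorname{Ku}(Z_A),w)$ and the double EPW sextic as a moduli space $M_{\sigma'}(\operatorname{Ku}(Z_A),v)$. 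Transporting these descriptions through the equivalence yields $\widetilde{X}_A\cong M_v(S_2,B)$ and $\widetilde{Y}_A\cong M_w(S_2,B)$ on the same twisted K3 surface, which is the crux of the statement.

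Given this common description the existence of $\varphi$ is formal: for a smooth projective moduli space of Bridgeland-stable twisted sheaves there is a canonical Hodge isometry between its transcendental lattice and that of the twisted K3 surface, so $T(\widetilde{X}_A)\cong T(S_2,B)\cong T(\widetilde{Y}_A)$ as integral Hodge structures \cite{BLMNPS}. To extend $\varphi$ to $H^2_{prim}$ I would argue as follows: since $T(\widetilde{X}_A)\cong T(\widetilde{Y}_A)$ their discriminant forms coincide, and as each transcendental lattice is the orthogonal complement of a $(-2)$-class inside the fixed lattice $\Lambda$, the generators $\delta_X,\delta_Y$ of the two primitive Picard lattices have the same divisibility; by the Eichler criterion they then lie in a single $O(\Lambda)$-orbit, and the gluing data of $T\oplus\langle\delta\rangle\hookrightarrow\Lambda$ is determined by the common discriminant form (which is rigid, being of order $2$). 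Hence $\varphi$ together with the isometry $\langle\delta_X\rangle\to\langle\delta_Y\rangle$ sending $\delta_X\mapsto\delta_Y$ extends to an integral isometry $H^2(\widetilde{X}_A,\mathbb{Z})_{prim}\to H^2(\widetilde{Y}_A,\mathbb{Z})_{prim}$.

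The main obstacle is the second paragraph: establishing that \emph{both} EPW varieties are moduli spaces on the \emph{same} pair $(S_2,B)$, which hinges on the geometricity of $\operatorname{Ku}(Z_A)$ for these special Lagrangians together with the simultaneous moduli-of-objects descriptions of $\widetilde{X}_A$ and $\widetilde{Y}_A$. By contrast, the transcendental Hodge isometry and its lattice-theoretic extension are routine once the common twisted K3 is in hand.
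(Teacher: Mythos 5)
Your reduction to a common twisted K3 surface rests entirely on the Kuznetsov component of a Gushel--Mukai fourfold $Z_A$, and this is where the argument breaks, in two ways. First, the Lagrangians relevant to this lemma contain a decomposable vector ($\PP(A)\cap G(3,6)$ is a point), and by the Debarre--Kuznetsov classification \cite{DK} smooth GM varieties of dimension $\geq 3$ correspond precisely to Lagrangian data whose Lagrangian contains \emph{no} decomposable vectors; so for these special $A$ there is no smooth GM fourfold $Z_A$, hence no semiorthogonal decomposition defining $\operatorname{Ku}(Z_A)$, and the geometricity statement $\operatorname{Ku}(Z_A)\simeq D^b(S_2,B)$ you want to invoke has no meaning here. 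Second, and more fundamentally, the statement you import from \cite{PPZ,BLMNPS} --- that the double EPW cube $\widetilde{Y}_A$ \emph{is} a moduli space $M_{\sigma}(\operatorname{Ku}(Z_A),w)$ --- is not proved in those references: it is exactly the problem posed in \cite[\S 5.4.2]{PPZ}, which this paper answers as a corollary of Proposition \ref{thm:main}, whose proof in turn relies on the present lemma. PPZ construct hyper-K\"ahler sixfolds as moduli spaces of objects in $\operatorname{Ku}$, but do not identify them with double EPW cubes; assuming that identification makes your proposal circular relative to the paper's logical structure.

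The paper's actual route avoids GM fourfolds and Kuznetsov components altogether and goes through the double EPW quartic: by \cite[5.1]{vGK}, $\widetilde{X}_A$ is the moduli space $M_{(2,2B,0)}(S_2,B)$ of twisted sheaves on the degree-two K3 surface $S_2$ with Brauer class $B$ induced by the Verra threefold associated with $A$; by \cite[\S 5]{CKKM} and \cite{IKKR2}, the double EPW quartic $\widetilde{Z}_A$ --- the base of the birational contraction that $\widetilde{Y}_A$ acquires for these special $A$ --- is $M_{(0,h,0)}(S_2,B)$ on the \emph{same} twisted surface; and by Bayer--Macr\`i \cite{BM} (cf.~\cite[4.2]{vGK}), $\widetilde{Y}_A$ and $\widetilde{Z}_A$ are moduli spaces of twisted sheaves on one and the same twisted K3. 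Chaining these gives $T(\widetilde{X}_A)\cong T(\widetilde{Z}_A)\cong T(\widetilde{Y}_A)$; note the paper even remarks afterwards that it does \emph{not} prove the two intermediate twisted K3 surfaces coincide, which is all the subsequent arguments need, whereas your proposal claims the stronger common-$(S_2,B)$ statement directly. Your final lattice-theoretic paragraph is close in spirit to what the paper does (it simply sends the effective $(-2)$-class to the effective $(-2)$-class, with the sign ambiguity absorbed later by an integral reflection), but it cannot rescue the main step, which as written rests on an undefined category and on the very result the lemma is meant to help prove.
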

 \begin{proof}
 We first prove that $T(\widetilde{Y}_A)$ and $T(\widetilde{Z}_A)$ are Hodge isometric, then that $T(\widetilde{X}_A)$ and $T(\widetilde{Z}_A)$ are isomorphic.
 The first follows from the fact that from \cite[Section 4.2]{vGK} the sixfold $\widetilde{Y}_A$ and the base of the contraction are moduli spaces of twisted sheaves on the same K3 surface with the same Brauer class (cf.~\cite{BM}).
 
 Now let us prove the second isomorphism.
  From \cite[Section 5.1]{vGK}, we deduce that $\widetilde{X}_A$ is isomorphic to the moduli space of twisted sheaves on the K3 surface $S_2$ associated with $A$ and the Brauer class $\beta$ with $B$-lift $B\in \frac{1}{2}H^2(S_2,\Z)$ such that $Bh=0$, $B^2=\frac{1}{2}$ (induced by the associated Verra threefold) and Mukai vector $v=(2,2B,0)$.
 It follows from \cite[Section 5]{CKKM}, \cite{IKKR2} that the EPW quartic which is the base of the contraction on $\widetilde{Y}_A$ is the moduli space of twisted sheaves on the same surface $S_2$ with the same Brauer class $\beta$ but Mukai vector $v=(0,h,0)$.
 
 Finally, let us prove that $\varphi$ extends to $H^2_{{\operatorname{prim}}}(\widetilde{X}_A,\mathbb{Z})$. To do so, it suffices to define a natural integral isometry on the remaining generator, which is the effective divisor of square $-2$, which we send to the effective divisor of square $-2$ inside $H^2_{{\operatorname{prim}}}(\widetilde{Y}_A,\mathbb{Z})$. 
 \end{proof}

Notice that in the above proof we prove that $\widetilde{Z}_A$ and $\widetilde{X}_A$ are associated to the same twisted K3 surface $(S_2,B)$ and $\widetilde{Z}_A$ and $\widetilde{Y}_A$ are also associated with the same twisted K3 surface $(S_2',B')$, but we did not prove $(S_2,B)=(S_2',B')$.

Next, let us prove that $\varphi$ is, up to a constant, the above identification.

\begin{lemma}\label{lem:integral_on_divisor}
Assume $A$ is very general in $\mathcal U\cap \Sigma$ and let $$\phi_A\,\colon\, H^2_{{\operatorname{prim}}}(\widetilde{X}_A,\mathbb{C}) \to H^2_{{\operatorname{prim}}}(\widetilde{Y}_A,\mathbb{C}) $$ be the isometry induced by $\phi$. Then $\phi_A$ coincides with the isometry $\varphi$ on $T(\widetilde{X}_A)\otimes\mathbb{C}$ defined in Lemma \ref{lem:moduli_isometry}.
\end{lemma}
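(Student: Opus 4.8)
The plan is to exploit the fact, established in Lemma \ref{lem:moduli_isometry}, that both $\widetilde{X}_A$ and $\widetilde{Y}_A$ are realised as moduli spaces of twisted sheaves on a common twisted K3 surface $(S_2,B)$. For such moduli spaces there is a canonical Hodge isometry between the transcendental lattice $T(\widetilde{X}_A)$ (resp. $T(\widetilde{Y}_A)$) and the transcendental lattice $T(S_2,B)$ of the twisted K3, coming from the general theory of moduli of (twisted) sheaves. This gives us the isometry $\varphi$, and it is defined over $\mathbb{Z}$. The goal is to show that the map $\phi$ we produced purely Hodge-theoretically, via the correspondence $k_A$ and the Veronese/$\operatorname{Sym}^2$ argument, agrees with $\varphi$ on $T(\widetilde{X}_A)\otimes\mathbb{C}$ up to a scalar.

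The key observation is that on the transcendental lattice there is essentially no freedom. First I would note that $T(\widetilde{X}_A)\otimes\mathbb{Q}$ and $T(\widetilde{Y}_A)\otimes\mathbb{Q}$ are \emph{irreducible} rational Hodge structures (the transcendental lattice of a very general/special-but-generic projective hyper-K\"ahler manifold carries no proper sub-Hodge-structure). Both $\phi$ (restricted to the transcendental part) and $\varphi$ are isomorphisms of these irreducible rational Hodge structures, and both send the line $\mathbb{C}\sigma_{X_A}$ spanned by the holomorphic two-form to $\mathbb{C}\sigma_{Y_A}$, since $\phi$ was constructed precisely so that $\phi(\sigma_{X_A})=\sigma_{Y_A}$ and $\varphi$ preserves the $(2,0)$-part by definition. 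Therefore the composite $\varphi^{-1}\circ\phi$ is an endomorphism of the irreducible rational Hodge structure $T(\widetilde{X}_A)\otimes\mathbb{Q}$ fixing the $(2,0)$-line. By Schur's lemma, the endomorphism algebra of an irreducible rational Hodge structure is a division algebra over $\mathbb{Q}$; for a transcendental lattice of K3 type it is in fact a field $E$ (either $\mathbb{Q}$, a real quadratic field, or a CM field), acting on $H^{2,0}$ by a single embedding. An endomorphism fixing $\sigma_{X_A}$ up to scalar must therefore act as multiplication by an element $e\in E$ whose image under the distinguished embedding is that scalar; combined with the requirement that both maps are isometries (up to scale), this pins $\varphi^{-1}\circ\phi$ down to a scalar multiple of the identity.

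Concretely, I would argue as follows. Both $\phi$ and $\varphi$ are, after normalising by a constant as in the text, isometries for the intersection forms; hence $\varphi^{-1}\circ\phi$ is a Hodge endomorphism of $T(\widetilde{X}_A)\otimes\mathbb{Q}$ that is also an isometry (orthogonal transformation) fixing the one-dimensional $(2,0)$-piece up to scale. Since the Hodge endomorphism algebra $E$ acts on the one-dimensional $H^{2,0}$ through an embedding $E\hookrightarrow\mathbb{C}$, and an \emph{isometry} lying in $E^{\times}$ must have reduced norm a root of unity, the only possibilities are scalars; matching the action on $\sigma_{X_A}$ forces $\varphi^{-1}\circ\phi$ to be multiplication by a scalar, i.e. $\phi = \lambda\,\varphi$ on $T(\widetilde{X}_A)\otimes\mathbb{C}$ for some $\lambda\in\mathbb{C}$. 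This is exactly the assertion, that $\phi$ coincides with $\varphi$ up to the constant that was already allowed in the statement.

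The main obstacle, and the point requiring care, is justifying the Schur-type rigidity in the \emph{special} (rather than very general) setting: when $A$ has a decomposable tensor, the Picard rank jumps and one must check that $T(\widetilde{X}_A)\otimes\mathbb{Q}$ is still irreducible with endomorphism field forcing uniqueness. I would handle this by using the explicit description from Lemma \ref{lem:moduli_isometry}: since $T(\widetilde{X}_A)\cong T(S_2,B)$ is the transcendental lattice of a \emph{generic} member of the relevant family of twisted K3 surfaces, its endomorphism algebra is $\mathbb{Q}$ (no extra CM or real-multiplication), so the Hodge structure is not merely irreducible but has the smallest possible endomorphism field, and any Hodge isometry fixing the period line up to scale is automatically $\pm\mathrm{id}$ times a scalar. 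With $E=\mathbb{Q}$ the rigidity is immediate and the scalar-matching argument closes the proof cleanly.
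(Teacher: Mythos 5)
Your proposal rests on the same rigidity principle as the paper's proof --- both $\phi_A$ and $\varphi$ send $\sigma_{X_A}$ to $\sigma_{Y_A}$, and the transcendental Hodge structure is too rigid for two such maps to differ --- but implements it by a genuinely different and much heavier route. The paper's proof is one line: it considers the \emph{difference} $\phi_A-\varphi$ rather than the composite $\varphi^{-1}\circ\phi_A$; this difference kills $\sigma_{X_A}$, so its kernel, being a sub-Hodge structure, contains the smallest Hodge substructure containing $\sigma_{X_A}$, which is $T(\widetilde{X}_A)$ itself, and that is the whole argument. Note what this buys: the minimality of $T(\widetilde{X}_A)$ among sub-Hodge structures containing $\sigma_{X_A}$ is an elementary orthogonality fact valid for any projective hyper-K\"ahler manifold, so the paper needs neither Zarhin's irreducibility theorem, nor the classification of the endomorphism field $E$, nor your norm argument for isometries in $E^\times$, nor the discussion in your last paragraph of whether $E=\mathbb{Q}$ persists on the special divisor; all of that machinery can simply be discarded. (Also, since both maps are normalized so that $\sigma_{X_A}\mapsto\sigma_{Y_A}$ on the nose, your scalar $\lambda$ is $1$ and no separate matching step is needed.)

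One step of your argument deserves a concrete warning, because your route makes it load-bearing: you assert that $\varphi^{-1}\circ\phi$ \emph{is an endomorphism of the irreducible rational Hodge structure} $T(\widetilde{X}_A)\otimes\mathbb{Q}$. But $\phi$ is only known to be defined over a finite extension of $\mathbb{Q}$ --- that is all the Veronese/$\operatorname{Sym}^2$ lemma provides, and indeed rationality (integrality) of $\phi$ is precisely what this part of the paper is trying to establish --- so the composite is a priori only $\overline{\mathbb{Q}}$-linear. Schur's lemma and Zarhin's theory concern $\mathbb{Q}$-linear Hodge endomorphisms; a merely $\mathbb{C}$-linear isometry preserving the bigrading and fixing $\sigma$ need \emph{not} be the identity (take the identity on $H^{2,0}\oplus H^{0,2}$ and an arbitrary orthogonal transformation of the transcendental $(1,1)$-part). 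To be fair, the paper's kernel argument glosses the same point (the kernel is a priori only defined over that number field), and the repair is identical in both cases: for $A$ very general in the divisor, $\sigma_{X_A}$ lies in no proper subspace of $T(\widetilde{X}_A)\otimes\mathbb{C}$ defined over a fixed number field, by a countability argument. So your proof sits at the same level of rigor as the paper's; just be aware that this rationality step, and not the Schur/Zarhin formalism built on top of it, is where the real content lies.
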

\begin{proof}
Let us consider the Hodge morphism $\phi_A-\varphi$ on $T(\widetilde{X}_A)\otimes\mathbb{C}$. Since both morphisms map the line $\langle\sigma_X\rangle$ to the line $\langle\sigma_Y\rangle$ and they are isometries, we have $(\phi_A\pm\varphi)(\sigma_X)=0$ for a choice of sign $\pm$. Hence this morphism is zero in the smallest Hodge structure containing $\sigma_X$, which is precisely $T(\widetilde{X}_A)$, therefore the two morphisms coincide up to a sign. This sign choice is locally constant, so if we obtain $\phi_A=-\varphi$, we can replace $\phi$ with $-\phi$ in Proposition \ref{prop:phi}, so that the claim follows.
\end{proof}

\begin{lemma}\label{lem:H2integers}
Let $A \in \mathcal U$ and $\phi$ be an isometry as in Proposition \ref{prop:phi}. Then the Hodge isometry $$\phi_A\colon H^2_{{\operatorname{prim}}}(\widetilde{X}_A,\mathbb{C}) \to H^2_{{\operatorname{prim}}}(\widetilde{Y}_A,\mathbb{C})$$ defined by (\ref{phiA}) restricts to an isomorphism 
$$H^2_{{\operatorname{prim}}}(\widetilde{X}_A,\mathbb{Z})\to H^2_{{\operatorname{prim}}}(\widetilde{Y}_A,\mathbb{Z}).$$
\end{lemma}
\begin{proof}
By Lemma \ref{lem:integral_on_divisor} and Lemma \ref{lem:moduli_isometry}, we know that $\phi$ induces an integral isometry between the lattices $\mu_{X,A'} (T(\widetilde X_A'))$ and $\mu_{Y,A'}^{-1}(T(\widetilde Y_A'))$  for a fixed $A'\in \mathcal U\cap \Sigma$. The two lattices are primitive $21$-dimensional sublattices of $L_X$ and $L_Y$. The orthogonal complements of these sublattices are generated each by a single integral class, which we call $D_X$ and $D_Y$ respectively. As $\phi$ is an isometry and $D_X^2=D_Y^2=-2$, we must have either $\phi(D_X)=D_Y$ or $\phi(D_X)=-D_Y$. So either $\phi$ coincides with the natural isometry $L_X\cong L_Y$ induced by $\varphi$ of Lemma \ref{lem:moduli_isometry}, or it coincides up to the action of the reflection with center $D_Y$. As this reflection is integral, in both cases $\phi$ is integral as well. In consequence, $\phi_A$ is also integral. 
\end{proof}

\begin{proof1} Let us take $A$ very general in $\mathcal U$. By Propoosition \ref{prop:phi} there exists an isometry $\phi:L_X\otimes \mathbb C\to L_Y \otimes \mathbb C$  such that $\phi_A$ is a complex Hodge isometry. Furthermore, by Lemma \ref{lem:H2integers} the map $\phi_A$ is integral and in consequence is an integral Hodge isometry.
\end{proof1}

As a direct consequence of Theorem 1.1, we obtain Corollary \ref{cor:stability}.

\begin{proof2}
By \cite[Theorem 1.7]{PPZ}, for a very general Lagrangian $A$ there exists a stability condition $\sigma$ on $\operatorname{Ku}(A)$ and a Mukai vector $v=\lambda_1+\lambda_2$ such that $M_\sigma(\operatorname{Ku}(A),v)$ is smooth. Here, $\lambda_1$ and $\lambda_2$ are two orthogonal generators of the integral part of the Hochschild cohomology of $\operatorname{Ku}(A)$ of square 2. By \cite[Theorem 1.5]{PPZ}, we have $v^\perp\cong H^2(M_\sigma(\operatorname{Ku}(A),v),\ZZ)$. The transcendental part of the above Hodge structure is given by $(\lambda_1-\lambda_2)^\perp \cap (\lambda_1+\lambda_2)^\perp$, as by our assumption on $A$, $\lambda_1$ and $\lambda_2$ are the only algebraic integral classes. Thus, 
\begin{align*}
    H^2_{\operatorname{prim}}(M_\sigma(\operatorname{Ku}(A),v)) &=(\lambda_1-\lambda_2)^\perp \cap (\lambda_1+\lambda_2)^\perp =\\
    =\lambda_1^\perp \cap \lambda_2^\perp&=H^2_{\operatorname{prim}}(M_\sigma(\operatorname{Ku}(A),\lambda_1))=H^2_{\operatorname{prim}}(M_\sigma(\operatorname{Ku}(A),\lambda_2))
\end{align*}
where $M_\sigma(\operatorname{Ku}(A),\lambda_1)$ and $M_\sigma(\operatorname{Ku}(A),\lambda_2)$ are isomorphic either to $\widetilde{X}_A$ or $\widetilde{X}_{A^{\vee}}$ by \cite[Proposition 5.17]{PPZ}. Therefore, by Theorem \ref{thm:main}, the double EPW cube $\widetilde{Y}_A$ is Hodge isometric to $M_\sigma(\operatorname{Ku}(A),v)$, hence isomorphic to it by the global Torelli theorem \cite{V1}. 
\end{proof2}


\begin{rem}
In a recent preprint \cite[Section 5.2]{GLZ}, Guo, Liu and Zhang prove that the moduli space of stable objects on a very general Gushel--Mukai fourfold $X$ with Mukai vector $\lambda_1+\lambda_2$ contains naturally the maximal rational quotients of the moduli spaces of twisted cubics on a Gushel--Mukai threefold contained in the fourfold.  Together with Theorem \ref{thm:main}, this provides strong evidence for the fact that the (at least very general) EPW cube should correspond to the maximal rational quotient of the moduli space of twisted cubics on a Gushel--Mukai fourfold.

\end{rem}





\section{Period domains and period maps}\label{s7}

\subsection{Period maps}
To a Lagrangian $A \subset \bigwedge^3 W$, we can associate the corresponding point in the period domains associated to $\widetilde{X}_A$ and $\widetilde{Y}_A$.

More precisely, by associating to $A\in LG(10,\bigwedge^3 W)$ the Hodge structure on $\mathrm{H}^2(\widetilde{X}_A,\mathbb{Z})=:{\Lambda_X}$ or $\mathrm{H}^2(\widetilde{Y}_A,\mathbb{Z})=:{\Lambda_Y}$ we obtain 
 two rational period maps:
$$\mathcal{P}_X\,\colon\, LG(10,\textstyle \bigwedge^3W) \dashrightarrow \mathbb{D}_{X},$$

$$\mathcal{P}_Y \,\colon\, LG(10,\textstyle \bigwedge^3W)\dashrightarrow \mathbb{D}_{Y}.$$

 Here, $\mathbb{D}_{X}$ and $\mathbb{D}_{Y}$ are the quasi-projective period domains of double EPW sextics i.e.~manifolds of K3$^{[2]}$ type with a polarization of degree $2$ and divisibility 1, and double EPW cubes i.e. manifolds of K3$^{[3]}$ type with a polarization of degree $4$ and divisibility 2 respectively. It follows from Eichler criterion \cite[Lemma 3.5]{GHS} that both polarizations are unique up to isometry in the respective lattices $\Lambda_X$ and $\Lambda_Y$. 
 The orthogonal complements of those polarizations are in both cases isomorphic to 
\begin{equation}\label{eq:lattice}
\Lambda := U^2 \oplus E_8(-1)^2 \oplus (-2)^2.
\end{equation}
The two period domains $\mathbb{D}_{X}$ and $\mathbb{D}_{Y}$ are obtained as the quotient of $\Lambda$ by the respective groups ${O}_X(\Lambda)$ and $O_Y(\Lambda)$ of automorphisms of $\Lambda_X$ and $\Lambda_Y$ preserving the polarization. 
There is a degree two covering map 
\begin{equation}\label{eq:rho}
\rho \,\colon\,  \mathbb{D}_{X}\to \mathbb{D}_{Y}
\end{equation}
by \cite[Proposition 1.2.3, item (4) and Section 2.3]{LO}: in the notation of that paper, we have $\mathbb{D}_X=\mathcal{F}_\Lambda(\widetilde{O}^+(\Lambda))$ and $\mathbb{D}_Y=\mathcal{F}(O^+(\Lambda))=\mathcal{F}(20)$. More precisely, the covering map is induced by the index two inclusion $\widetilde{O}^+(\Lambda)\subset O^+(\Lambda)$, whose complement is represented by the isometry $\iota$ of $\Lambda$, defined by exchanging the two generators of degree $-2$, which lies in $O_Y(\Lambda)$ but not in $O_X(\Lambda)$.

As a consequence of Theorem \ref{thm:main}, we can compare the period maps $\mathcal{P}_X$ and $\mathcal{P}_Y$ by means of $\rho$.

  \begin{cor}\label{periods}
The composed map
$$\textstyle LG(10,\bigwedge^3 W)\stackrel{\mathcal{P}_X}{\dashrightarrow}
 \mathbb{D}_{X}\xrightarrow{\rho} \mathbb{D}_{Y},$$ 
is $\mathcal P_Y$ and it is well defined for all $[A]\in LG(10,\bigwedge^3 W)$ such that $\PP(A)\cap G(3,W)=\emptyset$.
\end{cor}
\begin{proof}
    Let $A$ be a Lagrangian as in \eqref{eq:Lagrangian_condition}. The maps $\mathcal{P}_X$ and $\mathcal{P}_Y$ are well defined as both $\widetilde{X}_A$ and $\widetilde{Y}_A$ are smooth.
    Following \cite[Section 2.3]{LO}, we see that the covering involution defined by $\rho$ has a natural geometric interpretation. Indeed, the involution induces an involution on the space $\textstyle LG(10,\bigwedge^3 W)$ which sends a Lagrangian to its dual. By Theorem \ref{thm:main}, for such a Lagrangian $A$ we have that $\mathcal{P}_Y=\rho\circ \mathcal{P}_X$, as $\widetilde{X}_A$ and $\widetilde{Y}_A$ have the same $H^2_{prim}$.
    
    The map $\rho$ is everywhere defined, and by \cite[Theorem 0.3]{O3}, the map $\mathcal{P}_X$ is well defined for all Lagrangians such that $\PP(A)\cap G(3,W)=\emptyset$. As $\mathcal{P}_Y=\rho \circ \mathcal{P}_X$, we conclude that $\mathcal{P}_Y$ is well defined on the same set of Lagrangians as $\mathcal{P}_X$. 
\end{proof}
\begin{rem}
Actually, by \cite[Theorem 0.2]{O3}, the map $\mathcal{P}_X$ is defined also on an open set of Lagrangians such that $\PP(A)\cap G(3,W)\neq \emptyset$, which are the Lagrangians that we used in Lemma \ref{lem:integral_on_divisor}. However, we do not need a precise description of this set of Lagrangians in the following, and we decided to stick to the simple generality assumption $\PP(A)\cap G(3,W)=\emptyset$
in the above corollary.

\end{rem}

\subsection{The image of the period maps}



By analogy with the paper \cite{O3}, we study the image of the period maps $\mathcal{P}_X$ and $\mathcal{P}_Y$. We describe some Heegner divisors inside $\mathbb{D}_X$ which lie (partially) outside the image of those period maps (cf.~\cite{B1} for the case of Debarre-Voisin varieties).

Denote by $\mathcal{M}$ the GIT  quotient
$$LG(10,\bigwedge^3 W)//\mathrm{PGL(W)} $$
parametrizing Lagrangians up to isomorphism. 
Consider its natural subset $\mathcal N\subset \mathcal M$ which parametrizes Lagrangian spaces with a decomposable tensor.
The period map descends to a rational map
$$p \,\colon\, \mathcal{M}\dashrightarrow \mathbb{D}_X^{BB}$$ where $\mathbb{D}_X^{BB}$ is the Baily-Borel compactification of the period domain.
In \cite[Theorem 0.2]{O3} O'Grady considered four divisors  $\mathbb{S}^{\ast}_2$, $\mathbb{S}_2'$, $\mathbb{S}_2''$, $\mathbb{S}_{4}$
 in the period domain that are in the complement of the image of the map $p|_{\mathcal{M}\setminus\mathcal{N}}$ (from \cite[Theorem 0.2]{O3} the map $p$ is regular and injective on $\mathcal{M}\setminus\mathcal{N}$ and its image is contained in $\mathbb{D}_X$). 
Moreover, the following descriptions of elements that fill out open dense subsets of those divisors was given by O'Grady in the same paper.

\begin{enumerate}
\item
 The divisor $\mathbb{S}_{4}$ parametrizes Hilbert schemes of two points on a quartic K3 surface: these have a natural involution (described by Beauville in \cite[Section 6]{beau_trivial})
 and a $6:1$ map to $G(2,4)$, which factors through the quotient by the above involution. In a sense, $G(2,4)$ with a triple structure can be seen as an EPW sextic, but the polarized manifold $(S^{[2]},H-\delta)$ does not correspond to a unique Lagrangian. Here, $H$ is the nef divisor induced on $S^{[2]}$ from the ample divisor of square 4 on $S$, and $2\delta$ is the class of the exceptional divisor of non-reduced subschemes. 
\item
Points in the two divisors $\mathbb{S}_2'$, $\mathbb{S}_2''$ correspond respectively to the Hilbert scheme of two points on a degree two K3 surface and the moduli space of stable sheaves on a K3 surface $S$, such that $S$ has a polarization $H$ of degree two and the Mukai vector is $(0,H,0)$. 
\item Finally, points in the divisor $\mathbb{S}^{\ast}_2$ correspond to EPW sextics with at least a singular plane, and their periods are related to the period of the K3 surface obtained as double cover of this plane branched along a sextic (plus a $2$-torsion Brauer class \cite{vGK}).
\end{enumerate}


By analogy to the case of double EPW sextics, we consider in the period domain $\mathbb{D}_Y$ of double EPW cubes the set of Hodge structures perpendicular to a $(1, 1)$-root 
of negative square. These are the images through the covering map $\rho\,\colon\,\mathbb{D}_X\to \mathbb{D}_Y$ of the above divisors by Corollary \ref{periods} and \cite[Theorem 0.2]{O3}. So from Corollary \ref{periods}, the period map $\mathcal M\setminus\mathcal N\to \mathbb{D}_Y$ is then regular and has image whose complement is the sum of the following divisors:
\begin{enumerate}
\item[$\mathbb{S}_4$:] $D^2 = -4$ and $\operatorname{div}(D) = 2$, 
\item[$\mathbb{S}^{\ast}_2$:] $D^2 = -2$ and $\operatorname{div}(D) = 1$, first orbit (EPW quartic),
\item[$\mathbb{S}^{\#}_2$:] $D^2 = -2$ and $\operatorname{div}(D) = 1$, second orbit.

\end{enumerate}
In the above list, we specified the $(1,1)$ root $D$ orthogonal to the period.
These three elements are in the orbit for the action of $O_Y(\Lambda)$ of $t_1 + t_2$, $d$, $t_1$, where $t_1$ and $t_2$ are generators of the orthogonal summands of square $-2$ in \eqref{eq:lattice} and $d\in (t_1\oplus t_2)^\perp$ has square $-2$. These are the only orbits of elements of this square and divisibility by Eichler's criterion \cite[Lemma 3.5]{GHS}.
Notice that the two classes $d$ and $t_1$, although they both have square -2 and divisibility one once they are embedded in $H^2(Y,\ZZ)$, have divisibility respectively $1$ and $2$ in $\Lambda$, hence they belong to different $O_Y(\Lambda)$ orbits.


Let us analyze in detail the hyper-K\"ahler sixfolds corresponding to general points in the above divisors and describe using Corollary \ref{periods} the associated EPW cubes (i.e.~Lagrangian degeneracy loci in $G(3,6)$ associated to the corresponding Lagrangian subspace).
\subsection{The case $\mathbb{S}_4$} Let $X \to \mathbb{P}^3$ be a generic manifold of $K3^{[3]}$-type having a Lagrangian fibration with a section. It was proven in \cite{MW}  that $\operatorname{Pic}(X) = U(2)$ and there are elements of divisibility $2$ in it. Let $F$ be the class of a pull back of a plane and let $E$ be the dual class to  twice the class of a line in the section. From \cite[Theorem 1.1]{hass_harv_tsch} we know that $E^2 = -12$ and $\operatorname{div}(E) = 2$. The class $E+4F$ has square $4$ and divisibility $2$ and it is orthogonal to $E-2F$, which has square $-4$ and divisibility $2$. The class $E+ 4F$ is movable (as $E$ and $F$ are effective) but not nef on X, as it pairs negatively with $E$. Therefore, there exists a hyper-K\"{a}hler manifold $Y$ and a birational map $f \,\colon\, Y \dashrightarrow X$, such that $f^{\ast}(E + 4F)$ is nef. Indeed, $Y$ is the Mukai flop of the section as we know that the movable cone is divided in two parts. Let us show that $Y$ can be also seen as a moduli space of sheaves on a K3 surface of degree $4$. 
\begin{prop}
Let $(S,H)$ be a K3 surface of degree four and let us consider the polarized fourfold $(S^{[2]},H-\delta)$, whose period is in $\mathbb{D}_X$.
 Then its image in $\mathbb{S}_4\subset
 \mathbb{D}_Y$
 through the map $\rho$ of \eqref{eq:rho} corresponds to a pair $(M_{v}(S),(2,-H,0))$, where is a moduli space of stable objects on $S$ with Mukai vector $v=(0,H,-2)$ and $(2,-H,0)$ a polarization of BBF degree $4$.

\end{prop}
In the above proposition, $(2,-H,0)\in H^*(S,\ZZ)$ is an element of the extended K3 lattice seen as the class of a divisor on $M_{v}(S)$ through Mukai's isomorphism $v^\perp\cong H^2(M_v(S),\ZZ)$ (cf. \cite[Appendix B]{D}).
\begin{proof}
By Corollary \ref{periods}, the period of $(S^{[2]},H-\delta)$ and its image in $\mathbb{D}_Y$ correspond to manifolds with isometric primitive cohomology and, in particular, with isometric transcendental lattice. A moduli space of stable objects on $S$ inherits the same transcendental lattice of $S$, thus corresponds to the given period point in $\mathbb{D}_Y$ (see \cite{MW}). The classes $H$ and $\delta$ of $S^{[2]}$ can be obtained inside two copies of $U$ contained in the Mukai lattice of $S$, whose standard generators we call $e_1,f_1$ and $e_2,f_2$. We have $e_1+f_1=H-\delta$, $e_2+f_2=(1,0,-1)$ (the Mukai vector corresponding to the moduli space $S^{[2]}$) and $e_1-f_1+e_2-f_2=H-2\delta$. In order to describe the considered double EPW cube, we have to choose $v=e_1+e_2+f_1+f_2$ and $H'=e_1+f_1-e_2-f_2$, and the result follows from a direct computation.  
\end{proof}
In the above notice that the class $(2,-H,0)$ is not ample on $M_{v}(S)$ and the class $(0,0,1)$ is not nef, so that the natural Lagrangian fibration is not everywhere defined and we need to change the biratonal model.

\begin{rem}
From Corollary \ref{periods} we infer that the 
 Lagrangian space $A_1$ associated to the corresponding EPW sextic cuts $G(3,6)$ along the second Veronese of $\PP^3$ and the EPW sextic is the triple quadric. We can show that the EPW cube corresponding to $A_1$ is an eightfold in $G(3,6)$, which is the set of planes cutting a given $4$-dimensional quadric in two lines.
This eightfold is singular in a sixfold $T$ of degree $64$ consisting of planes cutting the quadric in a double line.
The sixfold contains two second Veronese embedding of $\PP^3$; $\mathbb V_1=\PP(A_1)\cap G(3,6)$ and $\mathbb V_2=\PP(A_1^{\vee})\cap G(3,6)$ it can be seen as the sum of threefold cones with vertex $v\in \mathbb V_1$ and base the second Veronese surface corresponding to the dual plane to $v$ in $\mathbb V_2$. One can show that $T$ is contained in the locus $D^3_{A_1,\mathcal T}$, but $\mathbb V_1$ in $D^4_{A_1,\mathcal T}$.
We expect that the images of fibers of the Lagrangian fibration on $M_v(S)$ intersect
in Veronese surfaces. 
\end{rem}

\subsection{The cases $\mathbb{S}_2^{\ast}$ and $\mathbb{S}^{\#}_2$.}
The $\mathbb{S}^{\ast}_2$ divisor is described analogously to the EPW sextic case: it is given by Lagrangians containing a decomposable tensor and its general elements were described fully in \cite{IKKR2}. \\
The $\mathbb{S}^{\#}_2$ case corresponds to the image of the two divisors $\mathbb{S}'_2$ and $\mathbb{S}_2''$, and does not correspond to stable Lagrangians: in this case, we can describe the periods as follows. If we take a degree $2$ K3 surface $(S,h)$ and its Hilbert scheme $S^{[3]}$, we have two natural classes on $S^{[3]}$: the nef class $H$ induced by the polarisation $h$ of the K3 surface $S$ and the class $\delta$ such that $2\delta$ is the class of the locus of non-reduced subschemes. The class $2H-\delta$ has square $4$ and divisibility $2$. 
It is orthogonal to the class $H-\delta$, which corresponds to an effective divisor (as it has degree $-2$ class), which is hence contracted by the map 
\begin{equation}\label{eq:phi}
\varphi:=\varphi_{|2H-\delta|}\,\colon\, S^{[3]}\dashrightarrow \mathbb{P}^{19}
\end{equation}
associated to the linear system $|2H-\delta|$.
As $\delta$ is of divisibility $4$ we find that $12H-9\delta$ is a class of square $-36$ and divisibility $4$. It divides the movable cone in two K\"ahler chambers of birational models (see \cite{Mon}). 
So we can find a birational model of $S^{[3]}$ where $2H-\delta$ is ample.
In order to describe the image of $\varphi$ recall that the planes tangent to a fixed Veronese surface $v_2(\PP^2)$ in $\PP^5=\PP(W)$ can be described as a third Veronese surface  $v_3(\PP^2)\subset G(3,W)\subset \PP^{19}$. The span of $v_3(\PP^2)\subset \PP(\bigwedge^3 W)$ is a $9$-dimensional projective space $\PP(A_0)$ such that $A_0$ is Lagrangian for the symplectic structure on $\bigwedge^3 W$.
\begin{lem}
The map $\varphi$ as in \eqref{eq:phi} is an $8:1$ map to a subset of $G(3,W)$ that can be described as the set of planes intersecting the fixed Veronese surface $v_2(\PP^2)\subset \PP(W)$ in three points.
The image of $\varphi$ is contained in the EPW cube constructed from $A_0$.

\end{lem}
\begin{proof}
\textcolor{black}{First, the system $|2h|$ is still $2:1$ on $S$. So $|2H|$ factorises through $(\PP^2)^{[3]}$ because a general element from $|2H|$ can be described as a triple $p+q+r$ such that $p$ is contained in a fixed divisor in $|2h|$ (moreover $H^0(\oo_{S^{[3]}}(2H))=\sym^3H^0(\oo_S(2h)$ as they have the same dimension). The system $|2H-\delta|$ is a subsystem of $|2H|$ and thus} on an open subset of $S^{[3]}$ (of triples mapping to distinct points on $\PP^2$ through $|h|$), the map $\varphi$ factorizes through the birational map $f\,\colon\,(\PP^2)^{[3]}\dashrightarrow G(3,6)$
that associates to three points on the Veronese surface in $\PP^5$ the plane spanned by those points. 
We consider the divisor of non reduced points $D\subset(\PP^2)^{[3]}$ and the divisor $K \subset (\PP^2)^{[3]}$ of triples of points with one on a given line in $\PP^2$.
Then $|2K-D/2|$ gives
the morphism $f$ because and from \cite[p.~91]{EB} they both contract
the same curve $\mathcal{C}_{00}$ of triples
such that two points are fixed and the third one is on the line determined by them (and the Picard rank of 
$(\PP^2)^{[3]}$ is $2$). 
Thus the map $S\to \PP^2$ given by $|h|$
induces a generically $8:1$ rational map
$S^{[3]}\dashrightarrow  (\PP^2)^{[3]}$ such that the pullback of $2K-D/2$ is $2H-\delta$.

We claim that the image of $\varphi$ is contained in the EPW
cube corresponding to the considered Lagrangian $A_0$.
Indeed, consider (looking at the dual picture) the set of planes disjoint from the Veronese surface $V\subset \PP^5$ that intersect the secant cubic to the Veronese in three lines. 
Observe that for a fixed plane $U_1\subset G(3,6)$ such lines are contained in three different planes $T_1,T_2,T_3$ tangent to $V$. 
Such planes give points on the EPW cube as the tangent planes to $G(3,6)$ at those points cuts the third Veronese contained in $G(3,6)\subset \PP^{19}$ (of tangent spaces to $V$) in three points.
Indeed, it is enough to recall that the tangent space at a point $U\subset G(3,6)$
is the set of planes cutting $U$ in a line. Thus the points on $V$ corresponding to $T_1,T_2$ and $T_3$ are in the intersection of $T_{U_1}$ and $\PP(A_0)$, so $[U_1]\in D^3_{A_0}$.
\end{proof}
\begin{rem}
Note that an EPW cube has degree $480$, so the image of $\varphi$ cannot be equal to it as it has degree $57$  (from \cite[p.~110]{EB} we have $(2K-D/2)^4=57$). Moreover, one can prove using Macaulay2 that the EPW cube corresponding to $A_0$ has dimension $6$ (but we dont know it is reduced). The image of $\varphi$ is thus one of its components.
\end{rem}

This is a $\PP^2$ fibration over $S$.These base points cannot be eliminated by flops.


\begin{rem}
It is natural to consider in the period domain divisors corresponding to singular EPW cubes. We have at least two more such divisors.
This is $\mathbb{S}^{\ast}_{10}\subset \mathbb{D}_X$ such that $D^2 = -10$ and $\operatorname{div}(D) = 2$
and the divisor $\mathbb{S}^{\ast}_{12}$ such that $D^2 = -12$ and $\operatorname{div}(D) = 2$ in $\mathbb{D}_Y$ (the class is the monodromy orbit of $2d + t_1 + t_2$).

  For $\mathbb{S}^{\ast}_{10}\subset \mathbb{D}_Y$ the corresponding Lagrangian $A$ as in \cite[Section 4]{IKKR1} is general among Lagrangians for which the degeneracy
locus $D^3_{A, \mathcal F}\neq \emptyset$ (see Section \ref{sec:prel}(1)). By \cite[Section 4]{IKKR1}, its corresponding double EPW cube is a hyper-K\"ahler manifold $\widetilde{Y}_A$ birational to $ S^{[3]}$, where $S$ is a K3 surface of degree $10$. In this case, $\widetilde{X}_A$ is a singular variety which admits a small resolution $\overline{X}_A$ which is a hyper-K\"ahler fourfold such that $\overline{X}_A\cong S^{[2]}$. Here, the correspondence $k_A$ defined as in Proposition \ref{prop:corr} is still a well defined non-trivial correspondence between Hodge structures of $\overline{X}_A$ and $\widetilde{Y}_A$.

Finally from \cite[Remark 5.29]{DK period} and Theorem \ref{thm:main} the divisor $\mathbb{S}^{\ast}_{12}$ corresponds to those Lagrangians for which $D^4_{A,\mathcal{T}}$ is not empty.

\end{rem}




\section*{Appendix}
We provide here a program describing explicitly a general EPW sextic and a line four-secant to its singular locus. We do the computations in positive characteristic. For a fixed line $l$ in $\mathcal Z$ we choose a general Lagrangian space containing it and consider the corresponding EPW sextic. We then check that the corresponding line $L$ in $\mathbb{P}^5$ is four-secant to the singular locus $S$ of this EPW sextic.  Since the reduction to positive characteristic of $L\cap S$ consists of four distinct points then these are also four distinct points over $\mathbb{C}$. 
\begin{verbatim}
R=ZZ/107[x,y,z,t,u,v]
M=matrix{
    { 0,0,0,0,0,0,0,u,-t,z},
    {0,0,0,0,0,-u,t,0,0,-y},
    { 0,0,0,0,u,0,-z,0,y,0},
    {0,0,0,0,-t,z,0,-y,0,0},
    {  0,0,0,0,0,0,0,0,0,x},
    { 0,0,0,0,0,0,0,0,-x,0},
    {  0,0,0,0,0,0,0,x,0,0},
    {  0,0,0,0,0,0,0,0,0,0},
    {  0,0,0,0,0,0,0,0,0,0},
    {  0,0,0,0,0,0,0,0,0,0}
    }
MM=M+transpose M
K=ZZ/107[wa_1..wa_55,Z]
A=genericSymmetricMatrix(K,10)
SY=gens kernel transpose ((coefficients(flatten((
A*transpose(matrix{{0,-1,1,0,0,0,1,0,0,0},{0,0,1,0,-1,0,1,0,0,0}}))-
transpose matrix{{0,0,0,0,0,0,0,Z,0,0},{0,0,0,0,0,0,0,0,Z,0}}),
Monomials=>vars K))_1)
RT=transpose(SY*random(K^37,K^1)) 
-- we choose in a random way entries of A satisfying the assumptions 
NMM=sub(sub(A, RT),R);
NMM=sub(1/((NMM*transpose(matrix{{0,-1,1,0,0,0,1,0,0,0},
{0,0,1,0,-1,0,1,0,0,0}}))_(7,0)), ZZ/107)*NMM;
NMM-transpose NMM;
NMM-transpose NMM
EPW=(decompose (ideal det(NMM-MM)))_0 -- the EPW sextic
degree EPW 
dim EPW
S=(ideal singularLocus EPW) -- the singular surface of the EPW sextic
dim S
S=saturate S;
degree S
L=ideal(z,t,u,y+x-1) -- the line that we check to be four-secant to S
L:EPW
dim(L+S)
degree radical(L+S)-- L is four-secant to S
decompose(L+S)

\end{verbatim}

The second part of the program checks on an example that the four lines obtained from the projection from the line $L$ of the tangent spaces to $S$ in the four points of intersection $L\cap S$ do not lie on a quadric surface. Here, for simplicity we fix $RT$ in the program above and extend the field in which the varieties are defined, so that the four point in $L\cap S$ are defined over the field. Again, observe that the computation in positive characteristic implies that in the general setup over $\mathbb C$ the generality result on the position of the four lines in $\mathbb P^3$ also holds.
Below we assume that the script above was run with 

\begin{verbatim}
RT=sub(matrix{{-42,50,-23,-36,50,39,-34,40,-14,-21,38,-22,5,38,43,-47,
-38,-9,8,31,1,-22,4,-53,-12,-52,-33,51,5,-40,4,24,49,-6,38,43,-47,-17,
-30,8,-26,39,-53,-45,53,6,-5,43,41,46,-42,40,14,15,15,21}},K)
\end{verbatim}
We continue with the output from the previous script.
\begin{verbatim}
U=ZZ/107[a]/ideal(a^2-7*a+52) -- we define the extension field
AS=U[x,y,z,t,u,v]
LL=(map(AS,R))L -- we consider L over the new field
LS=(map(AS,R))S -- we consider L over the new field
decompose(LL+LS) -- now the intersection decomposes into four points
--defined over the field
P1=ideal mingens ideal(matrix{{x+33,y-34,z,t,u,v-1}}*
sub(jacobian LS, {u=>0,t=>0,z=>0,y=>34,x=>-33}))
P2=ideal mingens ideal(matrix{{x+48,y-49,z,t,u,v-1}}*
sub(jacobian LS, {u=>0,t=>0,z=>0,y=>49,x=>-48}))
P3=ideal mingens ideal(matrix{{x+a-1,y-a,z,t,u,v-1}}*
sub(jacobian LS, {u=>0,t=>0,z=>0,y=>a,x=>1-a}))
P4=ideal mingens ideal(matrix{{x-a+6,y+a-7,z,t,u,v-1}}*
sub(jacobian LS, {u=>0,t=>0,z=>0,y=>7-a,x=>a-6}))
--P1, P2,P3, P4 are the tangent spaces in the four points
PRL=U[e_1,e_2,e_3,e_4]
J=ideal mingens preimage(map(AS,PRL, gens LL), intersect(P1,P2,P3,P4))
--the projection of the union of P1,P2,P3,P4 from the line LL
betti J -- we check that there is no quadric in the ideal 

\end{verbatim}

\end{document}